\documentclass[11pt,reqno]{amsart}

\usepackage[T1]{fontenc}
\usepackage[utf8]{inputenc}
\usepackage{lmodern}
\usepackage{amsmath,amssymb,amsthm,mathtools,mathrsfs}
\usepackage{geometry}
\usepackage{enumitem}
\usepackage{booktabs}
\usepackage{xcolor}
\usepackage{microtype}
\usepackage[colorlinks=true,linkcolor=blue,citecolor=blue,urlcolor=blue]{hyperref}
\usepackage{aliascnt}
\usepackage[capitalise,noabbrev,nameinlink]{cleveref}
\usepackage{fancyhdr}

\hypersetup{
  pdftitle={Lonely Runners over Function Fields: Quantized Phase--Riesz product},
  pdfauthor={Xiyu Hu},
  pdfsubject={Function-field lonely runners, quantized Fourier support, and phase--Riesz products},
  pdfcreator={pdflatex}
}
\setlist{itemsep=0pt,topsep=0.3em,parsep=0pt,partopsep=0pt}
\allowdisplaybreaks
\numberwithin{equation}{section}
\newtheorem{theorem}{Theorem}[section]
\newaliascnt{conjecture}{theorem}
\newtheorem{conjecture}[conjecture]{Conjecture}
\aliascntresetthe{conjecture}
\newaliascnt{proposition}{theorem}
\newtheorem{proposition}[proposition]{Proposition}
\aliascntresetthe{proposition}
\newaliascnt{lemma}{theorem}
\newtheorem{lemma}[lemma]{Lemma}
\aliascntresetthe{lemma}
\newaliascnt{corollary}{theorem}
\newtheorem{corollary}[corollary]{Corollary}
\aliascntresetthe{corollary}
\newaliascnt{problem}{theorem}
\newtheorem{problem}[problem]{Problem}
\aliascntresetthe{problem}
\theoremstyle{definition}
\newaliascnt{definition}{theorem}
\newtheorem{definition}[definition]{Definition}
\aliascntresetthe{definition}
\theoremstyle{remark}
\newaliascnt{remark}{theorem}
\newtheorem{remark}[remark]{Remark}
\aliascntresetthe{remark}

\crefname{theorem}{Theorem}{Theorems}
\Crefname{theorem}{Theorem}{Theorems}
\crefname{conjecture}{Conjecture}{Conjectures}
\Crefname{conjecture}{Conjecture}{Conjectures}
\crefname{proposition}{Proposition}{Propositions}
\Crefname{proposition}{Proposition}{Propositions}
\crefname{lemma}{Lemma}{Lemmas}
\Crefname{lemma}{Lemma}{Lemmas}
\crefname{corollary}{Corollary}{Corollaries}
\Crefname{corollary}{Corollary}{Corollaries}
\crefname{problem}{Problem}{Problems}
\Crefname{problem}{Problem}{Problems}
\crefname{definition}{Definition}{Definitions}
\Crefname{definition}{Definition}{Definitions}
\crefname{remark}{Remark}{Remarks}
\Crefname{remark}{Remark}{Remarks}

\DeclareMathOperator{\codim}{codim}
\DeclareMathOperator{\supp}{supp}
\DeclareMathOperator{\Tr}{Tr}
\newcommand{\spanq}{\operatorname{span}_{\mathbb F_q}}

\newcommand{\Fq}{\mathbb F_q}
\newcommand{\Fp}{\mathbb F_p}
\newcommand{\Kinf}{\mathbb F_q((T^{-1}))}
\newcommand{\Torus}{\mathbb T_q}
\newcommand{\E}{\mathbb E}
\newcommand{\one}{\mathbf 1}
\newcommand{\cF}{\mathcal F}
\newcommand{\cD}{\mathcal D}
\newcommand{\cR}{\mathcal R}

\newcommand{\cC}{\mathcal C}
\newcommand{\eps}{\varepsilon}
\newcommand{\wh}{\widehat}
\newcommand{\abs}[1]{\lvert#1\rvert}
\newcommand{\norm}[1]{\lVert#1\rVert}
\newcommand{\angles}[1]{\langle#1\rangle}
\newcommand{\set}[1]{\{#1\}}

\title[Quantized phase--Riesz product over function fields]
{Lonely Runners over Function Fields:\\ Quantized Phase--Riesz product}
\author{Xiyu Hu}
\address{School of Mathematical Sciences, University of Chinese Academy of Sciences}
\email{hxypqr@gmail.com}
\date{}

\subjclass[2020]{11J71, 11K41, 05D05, 42B05}
\keywords{Lonely runner conjecture, function fields, Riesz products, hypercontractivity, subspace coverings, sunflowers, polynomial syzygies}

\begin{document}

\begin{abstract}
Let $C_k(q)$ be the least cardinality of a family of nonzero polynomials over $\mathbb F_q$ whose associated codimension-$k$ partial-circulant kernels cover the full coefficient space.  Chow and Rimani\'c conjectured that
\[
 C_k(q)=1+q+\cdots+q^k.
\]
We disprove the unrestricted conjecture by constructing thirteen monic polynomials over $\mathbb F_2$ whose $k=3$ kernels cover $\mathbb F_2^7$; in particular,
\[
 C_3(2)\le13<15.
\]
For a general covering family of size $N=q^k+S$ and $\mathbb F_q$-linear rank $d$, we prove
\[
 S\gg d^{2/3}
 \left(
   \frac{\log(2q)}{\log(eNq^k/S)}
 \right)^{2/3}.
\]
Consequently, for every fixed $k\ge2$ and all sufficiently large $q$,
\[
 C_k(q)\ge q^k+c_kq^{2/3}.
\]
When $k=2$, an integer-multiplicity refinement of the second-moment covering argument yields
\[
 \liminf_{q\to\infty}\frac{C_2(q)-q^2}{q}\ge \widetilde c_2,
\]
where $\widetilde c_2$ is an explicit one-variable variational constant with numerical value
$\widetilde c_2=0.5829944375\ldots$.  We also classify triples admitting two independent low-degree polynomial syzygies and prove a conditional packet-free lower bound of size $q^k+(1/2-o(1))q^{k-1}$.
\end{abstract}

\maketitle
\tableofcontents

\section{Introduction}\label{sec:introduction}

\subsection{The classical Lonely Runner Conjecture}

The Lonely Runner Conjecture originates in work of Wills on simultaneous Diophantine approximation
\cite{Wills1967,Wills1968}.  After learning of Wills's formulation, Cusick gave an equivalent geometric reformulation and introduced the term ``view-obstruction problem''
\cite{Cusick}.  Its standard name comes from the formulation attributed to Goddyn in the paper of Bienia, Goddyn, Gvozdjak, Seb\H{o}, and Tarsi
\cite{BieniaEtAl}.  Suppose that runners with distinct constant speeds start together on a unit circular track.  The conjecture asserts that each runner is, at some time, at circular distance at least the reciprocal of the number of runners from every other runner.

After subtracting the speed of the runner under consideration, and then reducing from real to rational and integer speeds, one obtains the stationary-runner formulation.  For a finite set $V=\{v_1,\ldots,v_n\}\subset\mathbb N$ of distinct positive integers, define
\begin{equation}\label{eq:ML-classical}
 \operatorname{ML}(V)
 :=\sup_{t\in\mathbb R/\mathbb Z}
   \min_{v\in V}\norm{tv}_{\mathbb R/\mathbb Z}.
\end{equation}
The conjecture is
\begin{equation}\label{eq:LRC-classical}
 \operatorname{ML}(V)\ge\frac1{n+1}.
\end{equation}
The constant is best possible, since $V=\{1,2,\ldots,n\}$ has loneliness $1/(n+1)$.  Here $n$ is the number of non-stationary speeds, so the original formulation has $n+1$ runners.

The problem has useful interpretations in graph colouring, nowhere-zero flows, view obstruction, billiard motion, and covering radii of lattice zonotopes.  The distance-graph viewpoint and its circular and fractional colouring parameters are surveyed by Liu \cite{LiuSurvey}; geometric and polyhedral formulations were developed further by Henze--Malikiosis and Beck--Ho\c{s}ten--Schymura \cite{HenzeMalikiosis,BeckHostenSchymura}.  Rifford related quantitative bounds for the first time at which a runner becomes lonely to a finite-dimensional covering problem \cite{Rifford}.  A complementary inverse direction studies the set of attainable maximum-loneliness values: Kravitz proposed a rigidity refinement, while Giri and Kravitz determined the accumulation structure of the resulting spectra \cite{Kravitz,GiriKravitz}.  See also the recent survey \cite{PerarnauSerraSurvey}.

For fixed dimension, Betke and Wills settled the case of three positive speeds, Cusick and Pomerance the case of four, Bohman, Holzman, and Kleitman the case of five, and Barajas and Serra the case of six \cite{BetkeWills,CusickPomerance,BohmanHolzmanKleitman,BarajasSerra}; Renault later gave a shorter view-obstruction proof of the five-speed case \cite{Renault}.  Under additional hypotheses on the speeds, Dubickas proved the conjecture for an explicit class of sufficiently lacunary sequences \cite{Dubickas}, while random speed sets typically have much larger loneliness than the conjectured worst-case scale \cite{CzerwinskiRandom}.  A linearly-exponential finite-checking theorem of Malikiosis, Santos, and Schymura \cite{MalikiosisSantosSchymura} made substantially larger computer-assisted verifications possible.  Rosenfeld established the eight-runner case and subsequently the nine-runner case \cite{Rosenfeld,RosenfeldNine}; Trakulthongchai proved the nine- and ten-runner cases \cite{Trakulthongchai}; and Sungkawichai and Trakulthongchai extended the verified range through eleven, twelve, and thirteen runners \cite{SungkawichaiTrakulthongchai}.

A separate direction asks for the best lower bound that holds uniformly as $n\to\infty$.  For $v\ne0$ and $0<\delta<1/2$, let
\[
 B(v;\delta):=\set{t\in\mathbb R/\mathbb Z:\norm{tv}_{\mathbb R/\mathbb Z}\le\delta}.
\]
If $\operatorname{ML}(V)\le\delta$, then the sets $B(v;\delta)$ cover the circle and each has measure $2\delta$.  The union bound therefore gives
\[
 \operatorname{ML}(V)\ge\frac1{2n}.
\]
Chen and Chen--Cusick obtained the first refinements at the $n^{-2}$ scale, and Perarnau and Serra proved the unrestricted asymptotic estimate
\[
 \operatorname{ML}(V)\ge\frac1{2n-2+o(1)}
 =\frac1{2n}+\frac1{2n^2}+o(n^{-2});
\]
see \cite{Chen,ChenCusick,PerarnauSerra}.  Tao subsequently obtained
\[
 \operatorname{ML}(V)
 \ge\frac1{2n}
 +\frac{c\log n}{n^2(\log\log n)^2}
\]
for all sufficiently large $n$ \cite{Tao}.  Bedert introduced an additive-dimension dichotomy and Riesz products to prove the first polynomial improvement,
\[
 \operatorname{ML}(V)
 \ge\frac1{2n}+\frac1{n^{5/3+o(1)}}
\]
\cite{Bedert}.  A phase-randomised refinement of the large-dimension branch improves the exponent to $13/8$; this result is recorded in the joint online note
\cite{BedertHu}.

\subsection{The function-field model}

Finite-field variants of the runner problem were considered by Czerwi\'nski and Grytczuk
\cite{CzerwinskiGrytczuk}.  The model studied here is the non-Archimedean analogue introduced by Chow and Rimani\'c
\cite{ChowRimanic}.  Fix a prime power $q$ and let
\[
 \Kinf=\Fq((T^{-1})),
 \qquad
 \Torus=T^{-1}\Fq[[T^{-1}]].
\]
Every $\beta\in\Kinf$ decomposes uniquely as
\[
 \beta=[\beta]+\{\beta\},
 \qquad
 [\beta]\in\Fq[T],
 \quad
 \{\beta\}\in\Torus.
\]
If $\beta=\sum_{j\le m}b_jT^j$ with $b_m\ne0$, set
$\abs\beta=q^m$ and $\abs0=0$, and define the distance to the polynomial ring by
\[
 \norm\beta:=\abs{\{\beta\}}.
\]
Thus $\norm\beta$ takes values in
$\set{0,q^{-1},q^{-2},\ldots}$ and satisfies the ultrametric inequality.

For a finite family
$\cF\subset\Fq[T]\setminus\set{0}$, define
\begin{equation}\label{eq:delta-def-intro}
 \delta(\cF)
 :=\sup_{\alpha\in\Torus}
   \min_{f\in\cF}\norm{\alpha f}.
\end{equation}
Multiplication of a speed by an element of $\Fq^\times$ does not change $\delta(\cF)$, so speeds may be normalised to be monic.  For $k\ge1$, put
\begin{equation}\label{eq:Qk-def-intro}
 Q_k(q):=1+q+\cdots+q^k
 =\frac{q^{k+1}-1}{q-1}.
\end{equation}
The family of all monic polynomials of degree at most $k$ has cardinality $Q_k(q)$ and loneliness at most $q^{-(k+1)}$.  This led Chow and Rimani\'c to the following conjecture.

\begin{conjecture}[Chow--Rimani\'c]\label{conj:CR}
If $\abs{\cF}<Q_k(q)$, then
\[
 \delta(\cF)\ge q^{-k}.
\]
\end{conjecture}

It is convenient to encode the problem by
\begin{equation}\label{eq:Ck-def}
 C_k(q):=
 \min\set{
   \abs{\cF}:
   \cF\subset\Fq[T]\setminus\set{0},\
   \delta(\cF)<q^{-k}
 }.
\end{equation}
The discreteness of the norm means that
$\delta(\cF)<q^{-k}$ is equivalent to
$\delta(\cF)\le q^{-(k+1)}$.  The union bound gives
$C_k(q)\ge q^k+1$, while the standard family gives
$C_k(q)\le Q_k(q)$.  Hence \cref{conj:CR} is precisely the assertion
$C_k(q)=Q_k(q)$.

Chow and Rimani\'c proved the union-bound theorem
$\abs{\cF}\le q^k\Rightarrow\delta(\cF)\ge q^{-k}$, established the conjecture under a small-degree hypothesis, and obtained in the first nontrivial case $k=2$ the estimate
\begin{equation}\label{eq:CR-k2-intro}
 C_2(q)\ge q^2+0.4877q-O(1).
\end{equation}
Their $k=2$ proof is based on the geometry of codimension-four sunflowers among partial-circulant kernels.

\subsection{Main results}

Our first result shows that the unrestricted all-$q$, all-$k$ conjecture is false.

\begin{theorem}\label{thm:counterexample-intro}
There is a family $\cF_*\subset\mathbb F_2[T]$ of thirteen monic polynomials satisfying
\[
 \delta(\cF_*)=2^{-4}.
\]
Consequently,
\[
 C_3(2)\le13<15=Q_3(2).
\]
\end{theorem}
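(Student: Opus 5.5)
We reduce the statement to a finite linear-algebra check over $\mathbb F_2$ and then certify an explicit family. The reduction comes first. Write $\alpha=\sum_{j\ge1}a_jT^{-j}\in\Torus$. For a polynomial $f=\sum_{i=0}^{m}c_iT^i$, the condition $\norm{\alpha f}\le 2^{-4}$ says that the coefficients of $T^{-1},T^{-2},T^{-3}$ in $\alpha f$ vanish. The coefficient of $T^{-r}$ equals $\sum_{i}c_i a_{i+r}$. This gives three linear forms in the finite vector $(a_1,\dots,a_{m+3})$, forming a partial-circulant (Hankel/Toeplitz) system. Whether $\min_{f\in\cF}\norm{\alpha f}\le 2^{-4}$ therefore depends only on $(a_1,\dots,a_{D+3})$, where $D=\max\deg f$. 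Equivalently, it suffices to check all $\alpha$ truncated at that length. This matches the abstract's phrasing that the kernels cover a coefficient space. The abstract says the space is $\mathbb F_2^7$, which suggests taking $D\le4$.

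The key step is to exhibit thirteen monic polynomials $f_1,\dots,f_{13}$ over $\mathbb F_2$ of degree at most $4$. For each $f_i$, let $K_i\subset\mathbb F_2^7$ be the kernel of its $3\times 7$ partial-circulant matrix $M_i$, where row $r$ has entries $c_{j-r}$ in column $j$, for $r=1,2,3$. We must verify that $\bigcup_i K_i=\mathbb F_2^7$. If $M_i$ has full rank $3$, then $K_i$ has $16$ elements. Since $13\cdot16=208\ge128$, a cover is not ruled out by counting.

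To find the family, I would run a computer search. For each of the $31$ monic polynomials of degree $\le4$, compute $K_i$ as a bitmask over the $128$ vectors. Then solve the set-cover problem, either greedily or exactly via ILP or SAT, asking for a cover of size $13$. The monic polynomials of degree $\le3$ give the standard cover of size $15$; degree-$4$ polynomials add new kernels. The main obstacle is finding an actual cover with $13$ members. Verifying it is then routine. I would present the explicit list and note that coverage is a finite check of $128$ vectors, or give a structured partition argument: split by the leading coordinates of $a$ and exhibit the covering polynomial in each case.

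Finally, we show $\delta(\cF_*)=2^{-4}$ exactly, not merely $\le2^{-4}$. The covering gives $\delta\le 2^{-4}$. The union bound of Chow--Rimaniç, or simply the fact $13\le 2^4$, shows that $\delta(\cF_*)\ge 2^{-4}$: the case $k=4$ of $\abs{\cF}\le q^k\Rightarrow\delta\ge q^{-k}$ applies since $13\le16$. Hence $\delta(\cF_*)<2^{-3}$. By the definition \eqref{eq:Ck-def}, this gives $C_3(2)\le13<15=Q_3(2)$.
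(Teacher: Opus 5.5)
You never produce the thirteen polynomials, and that existence claim is the whole theorem. Your reduction is right: the coefficients of $T^{-1},T^{-2},T^{-3}$ in $\alpha f$ are three partial-circulant forms in $a_1,\dots,a_{D+3}$, so with $D=4$ the question is whether thirteen kernels $K_f$ cover $\mathbb F_2^7$. But the step you call the key step is replaced by a plan to run a set-cover search. Since the statement is a bare existence claim, that step is everything, and nothing in the proposal shows the search would succeed.

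The count $13\cdot16=208\ge128$ is far from enough, and there is a structural reason to expect difficulty. For each $x$, the set of $f$ with $\deg f\le4$ and $x\in K_f$ is the kernel of a $3\times5$ Hankel matrix. It is therefore a subspace of dimension at least $2$ in $\mathbb F_2[T]_{\le4}\cong\mathbb F_2^5$. A set of nonzero vectors meeting every $2$-dimensional subspace of $\mathbb F_2^5$ in a nonzero vector must have at least $15$ elements. So a $13$-element cover can exist only because the Hankel structure restricts which subspaces occur. The Chow--Rimani\'c conjecture predicted that no family of fewer than $15$ speeds works.

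The paper supplies the missing object,
\begin{align*}
\cF_*=\{&1,\ T^2+T,\ T^2+T+1,\ T^3,\ T^3+T^2+1,\ T^3+T+1,\ T^3+T^2+T+1,\\
&T^4+T^2,\ T^4+T,\ T^4+T^3+1,\ T^4+T^2+1,\ T^4+T+1,\ T^4+T^3+T^2+T+1\},
\end{align*}
and certifies the cover by exhaustive enumeration of all $128$ vectors. Your suggested prefix split would give a hand-checkable certificate for this list. Once $(x_0,\dots,x_3)$ is fixed, each degree-$4$ member forces a single suffix $(x_4,x_5,x_6)$. Each member of degree at most $3$ covers a block of $2$, $4$ or $8$ suffixes, or none. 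One then checks that all $16$ prefixes are fully covered. Without an explicit family and a check of this kind, the theorem is not proved.

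Your lower bound $\delta(\cF_*)\ge2^{-4}$ is correct and takes a different route from the paper. You apply the union bound at level $k=4$: thirteen codimension-$4$ kernels of density $2^{-4}$ cannot cover. The paper instead uses the explicit time $\alpha=T^{-4}$: every member has degree at most $4$ and $T^4\notin\cF_*$, so $\norm{T^{-4}f}\ge2^{-4}$ for all $f\in\cF_*$. Your argument works for any family of at most $16$ speeds, while the paper's exhibits a concrete extremal time. Either suffices; the deficiency lies entirely in the missing certificate.
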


The example was initially found computationally using a deep cross-entropy search in the spirit of Adam Zsolt Wagner's method \cite{WagnerNeural}, and is directly verifiable afterwards.

The family is displayed in \cref{sec:counterexample}.  Its covering property is certified by checking the $2^7$ vectors of the relevant coefficient space; the accompanying file
\texttt{verify\_counterexample.py} performs this check.

For a family $\cF$, write
\[
 d(\cF):=\dim_{\Fq}\spanq\cF.
\]
The main general estimate is sensitive to this linear rank.

\begin{theorem}\label{thm:rank-intro}
Fix $k\ge2$.  Suppose that $\delta(\cF)<q^{-k}$ and write
\[
 N:=\abs{\cF},
 \qquad
 S:=N-q^k,
 \qquad
 d:=d(\cF),
 \qquad
 L:=\log\left(\frac{eNq^k}{S}\right).
\]
Then $S>0$ and
\begin{equation}\label{eq:rank-intro}
 S\gg
 d^{2/3}
 \left(\frac{\log(2q)}{L}\right)^{2/3},
\end{equation}
where the implied constant is absolute.
\end{theorem}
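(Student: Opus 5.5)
Our plan is to recast the problem as a covering of a finite vector space by subspaces, and then play a second-moment (Riesz-product) argument against the rank $d$. Write $\alpha=\sum_{j\ge1}a_jT^{-j}$. For a fixed $f$, the condition $\norm{\alpha f}\le q^{-(k+1)}$ depends only on finitely many $a_j$, and it is a linear condition on them. Concretely, on the coefficient space $V=\Fq^{m}$ with $m\ge \max\deg f+k$, the set where $\norm{\alpha f}\le q^{-(k+1)}$ is the kernel $K_f$ of a $k\times m$ partial-circulant matrix built from the coefficients of $f$. Each $K_f$ has codimension exactly $k$, and $\delta(\cF)<q^{-k}$ says precisely that $\bigcup_{f}K_f=V$. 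In particular $N q^{m-k}\ge q^m$. Equality would force the $K_f$ to overlap only at $0$ in a way that counts cannot allow (they all share $0$), so $S>0$.

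Next we work with the indicator sum $F=\sum_f \one_{K_f}$, which satisfies $F\ge1$ on $V$. By Fourier analysis, $\wh{\one_{K_f}}$ is supported on the annihilator $K_f^\perp$. This annihilator is a $k$-dimensional space spanned by the shifts $T^i f$, $0\le i<k$, viewed as coefficient vectors. So $F=N q^{-k}+\sum_{\xi\ne0}\wh F(\xi)\chi_\xi$, with frequencies in $\bigcup_f\spanq\{f,Tf,\dots,T^{k-1}f\}$. The deficiency $F-1\ge0$ has mean $S q^{-k}$. The second moment then controls pairwise intersections:
\[
 \E (F-1)^2 \;=\; \sum_{f\neq g}q^{-\codim(K_f+\ldots)}\cdots,
\]
In words, overlaps $K_f\cap K_g$ of codimension $<2k$ (equivalently, polynomial syzygies $af=bg$ with $\deg a,\deg b<k$) are forced to be rare when $S$ is small. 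From this we will extract the statement that most pairs are in general position, so that the frequency sets are nearly disjoint.

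The core step is a quantized Riesz-product argument. Choose $\approx d$ polynomials in $\cF$ that are linearly independent, and from them a dissociated-like set of frequencies $\xi_1,\dots,\xi_r$ with $r\gtrsim d/L$; these lie in the spectrum of $F$ with coefficients of size $q^{-k}$. Form the phase-randomised product
\[
P=\prod_{i\le r}\bigl(1+\Re(\omega_i\chi_{\xi_i})\bigr),
\]
taking $\omega_i$ to be random $p$-th roots of unity, so that $P\ge0$ and $\E P=1$. Then test $\E[(F-1)P]\ge0$ against the structure of $F$: the product picks up the correlation $\sum_i|\wh F(\xi_i)|\approx r q^{-k}$. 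On the other side, hypercontractivity in the form of a level-$\ell$ inequality, with degree truncated at $\ell\approx L$, bounds the contribution of $F-1$ by $S q^{-k}$ times a factor of order $(\log(eNq^k/S))^{\ell}$. The quantization is what makes this work: frequencies take values in a lattice of $q$-adic size, which gives the gain $\log(2q)$. Balancing a linear gain in $r$ against a loss quadratic in $S$ coming from the overlap term (the cubic scaling arises from a first-moment versus second-moment trade-off) yields $S^{3/2}\gg r\log(2q)$, i.e. $S\gg (d\log(2q)/L)^{2/3}$.

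The main obstacle is the middle step: showing that linear rank $d$ really produces $\gtrsim d/L$ frequencies that are independent enough for the Riesz product, while controlling the aliasing among the $k$-dimensional spans $\spanq\{T^if\}$. We would first try a greedy selection that discards $f$ with a low-degree syzygy to an already chosen polynomial, charging each discarded $f$ to the overlap term in the second moment.
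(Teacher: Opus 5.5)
Your covering reformulation, the argument for $S>0$, and the description of the Fourier support are fine, but there is a genuine gap: you misidentify where the term quadratic in $S$ comes from. It cannot come from a second-moment overlap count, because that moment carries no small parameter. Since $\abs{K_f\cap K_g}\ge q^{-2k}\abs{V}$ for every pair, every covering family has $\E(F-1)^2\ge 1-q^{-k}$. So smallness of $S$ gives no way to show that non-generic pairs are rare; the paper has to \emph{assume} such rarity in its packet-free theorem. Nor does hypercontractivity bound the higher layers by $Sq^{-k}$ times $L^{\ell}$; a bound of that shape would give $S\gg d/L$ by the same balancing, more than the method delivers.

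The missing idea is to push $F$ forward along the surjection $\Lambda(x)=(\angles{m_1,x},\dots,\angles{m_d,x})$. The function $Y(b)=\E_x F(x)\,q^d\one_{\{\Lambda(x)=b\}}-1$ on $\Fq^d$ satisfies $0\le Y\le N-1$, $\E Y=\eps=S/q^k$, and $\wh Y(a)=\wh F(\sum_i a_im_i)$ for $a\ne0$. The level-$s$ inequality applied to $Y/N$ then gives $\sum_{|a|_0=s}\wh F(\sum_i a_im_i)^2\le\eps^2\bigl(CqL/\log(2q)\bigr)^s$. Quantization is used not to gain $\log(2q)$, but to convert this $\ell^2$ bound into the $\ell^1$ sums the Riesz product actually produces: a nonzero coefficient $u$ satisfies $u\ge q^{-k}$, hence $u\le q^ku^2$. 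This costs $q^k$ and turns $\eps^2$ into $q^k\eps^2=\eps S$. That is the real origin of the $S^2$ term in $0\le S-c\rho d+CS^2\rho^2\bigl(L/\log(2q)\bigr)^2$.

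The factor $\log(2q)$ also has a different origin, and your product cannot capture it. With one character per coordinate, $\prod_i\bigl(1+\operatorname{Re}(\omega_i\chi_{\xi_i})\bigr)$, the higher layers are governed by Steinhaus-type hypercontractivity, whose level constant is of order $L$. So at best you obtain $S\gg(d/L)^{2/3}$, which loses exactly the $(\log 2q)^{2/3}$ that makes the fixed-$k$ corollary read $q^{2/3}$ rather than $(q/\log q)^{2/3}$. The paper instead uses, in each coordinate $y=\angles{m_i,x}$, the $q$-ary factor $r_\rho(y)=1-\frac{\rho}{q-1}\sum_{c\ne0}\psi(cy)$. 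Its first layer is at most $-\rho dq^{-k}$, because all $q-1$ multiples $cm_i$ lie in $U_{m_i}$ and so $\wh F(cm_i)\ge q^{-k}$. Its level-$s$ coefficients $(\rho/(q-1))^s$ cancel the factor $q^s$ in the level-energy bound from Wolff's sharp hypercontractive constant $\sqrt{qr/\log(2q)}$ for the uniform measure on $\Fq$, leaving $(C\rho L/\log(2q))^s$.

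Two smaller points. First, with random phases $\omega_i$ the first layer $\sum_i\wh F(\xi_i)\operatorname{Re}\omega_i$ vanishes on average; since $\wh F\ge0$, you should fix the negative sign deterministically. Second, the obstacle you single out does not exist. $\Fq$-linear independence of $m_1,\dots,m_d$ already makes $\Lambda$ surjective, which is exactly the dissociation the $q$-ary product needs, so all $d$ speeds are used without loss. Your $r\gtrsim d/L$ is an unexplained loss that merely offsets the unjustified $\log(2q)$ in the numerology. Aliasing with other blocks $U_f$ only adds nonnegative mass to $\wh F(\sum_i a_im_i)$, which the level bound already absorbs.
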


A compression argument removes the rank parameter when $k$ is fixed.

\begin{theorem}\label{thm:fixed-k-intro}
For every fixed integer $k\ge2$, there are constants $c_k>0$ and $q_0(k)$ such that
\begin{equation}\label{eq:fixed-k-intro}
 C_k(q)\ge q^k+c_kq^{2/3}
\end{equation}
for every prime power $q\ge q_0(k)$.
\end{theorem}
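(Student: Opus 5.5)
The plan is to deduce \cref{thm:fixed-k-intro} from \cref{thm:rank-intro} by a compression argument. We show that any covering family can be replaced by one of essentially the same size whose linear rank $d$ is at least of order $q$, up to factors depending only on $k$. Then we substitute this into \eqref{eq:rank-intro}.

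Let $\cF$ be a family with $\delta(\cF)<q^{-k}$ and $N=\abs{\cF}=C_k(q)$. We may assume $S=N-q^k\le q^{2/3}$, since otherwise there is nothing to prove. The condition $\delta(\cF)<q^{-k}$ means that every $\alpha\in\Torus$ has some $f\in\cF$ with $\norm{\alpha f}\le q^{-(k+1)}$. Only finitely many coefficients of $\alpha$ matter, namely those up to index roughly $k+1+\max\deg f$. For each $f$, the bad set is a subspace of codimension $k+1$ in this coefficient space: the partial-circulant kernel. The union bound forces these $N$ kernels to cover the space with total overlap only $S$.

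The \textbf{compression step} is to reduce the degrees. Reduce all polynomials modulo a suitable structure, or project $\alpha$ onto a bounded window of coefficients. The aim is a family $\cF'$, with $\abs{\cF'}\le N$, consisting of polynomials of degree at most $D_k$, where $D_k$ depends only on $k$, that still covers. If the degrees are bounded by $D_k$, then $\spanq\cF'\subseteq\Fq[T]_{\le D_k}$, so $d\le D_k+1$. That is an upper bound on $d$, so it does not by itself produce a large $d$.

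So the main content runs the other way: we need $d$ to be large. The key point is that a family of small rank $d$ must fail to cover cheaply. Since $d\le k+1$ is forced when the degrees are at most $k$, the real statement to prove is a dichotomy:
\begin{itemize}
\item either $d\gg_k q$, and \cref{thm:rank-intro} gives $S\gg q^{2/3}(\log 2q/L)^{2/3}$;
\item or the family lives in a space of bounded dimension.
\end{itemize}
In the second case, write each $f$ as a combination of a basis $g_1,\dots,g_d$. Then $\norm{\alpha f}$ is controlled by the vectors of truncated coefficients of $\alpha g_i$, and the covering problem becomes a covering of $\Fq^{m}$ by at most $N$ codimension-$(k+1)$ subspaces from a parametrised family. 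A direct counting or hyperplane argument should then give $S\ge c_k q$.

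For $L$, note that $S\le q^{2/3}$ and $N\approx q^k$ give $L\ll_k\log q$. Hence $(\log 2q/L)^{2/3}\gg_k 1$, and the first branch gives $S\ge c_kq^{2/3}$ once $q\ge q_0(k)$.

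The main obstacle is making the large-rank branch genuinely produce $d\gg q$. I do not expect a literal $d\ge cq$ to follow directly. Instead, I would try to show that if $d$ is small, then the coefficient maps $\alpha\mapsto(\alpha g_i)$ let one \emph{compress} the family: project to a quotient in which a positive proportion of the kernels coincide or become redundant. That would force $S\gtrsim q^{k}/q^{k-1}\cdot(\text{const})$. I would attempt the compression by choosing a random linear functional quotient of $\Fq^{m}$, as in sunflower-type arguments, and tracking the multiplicity of coverage.
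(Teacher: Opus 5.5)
\textbf{There is a genuine gap.} It sits exactly where you place the ``main obstacle'': nothing in your proposal shows that a cover with $N<Q_k(q)$ must have rank $d\gg_k q$. Without that, \cref{thm:rank-intro} only gives $S\gg d^{2/3}$.

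Several parts of the proposal do not close this gap:
\begin{itemize}
\item Your dichotomy (``$d\gg_k q$, or the family lives in a space of bounded dimension'') is not exhaustive. The whole range $1\ll d\ll q$ is left out.
\item The bounded-rank branch rests on an unspecified ``direct counting or hyperplane argument'' for $S\ge c_kq$, with no actual argument.
\item The random linear quotient cannot work as stated. The kernels are $K_f=(fA_k)^\perp$, built from the blocks $f,Tf,\dots,T^{k-1}f$. A generic linear map of the coefficient space does not send the partial-circulant kernels of a speed family to those of another speed family, so the covering structure is lost.
\item Compressing to degree bounded in terms of $k$ alone is the wrong target, as you noticed yourself.
\end{itemize}
There is also a small slip. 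The condition $\norm{\alpha f}\le q^{-(k+1)}$ imposes $k$ linear conditions, so $K_f$ has codimension $k$, not $k+1$. With codimension $k+1$ the union bound would force $N\ge q^{k+1}$.

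\textbf{What is missing.} You need two ingredients: a compression compatible with multiplication by $T$ that lowers the degree to $O_k(d)$, and an external non-covering theorem in small degree.

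The paper's compression (\cref{prop:compression}) works in $\Fq[T]/(P)$, with $P$ monic irreducible of degree $M=D+1$.
\begin{itemize}
\item Suppose $h=\lfloor D/d\rfloor\ge k-1$, and let $b_1,\dots,b_d$ be a basis of $\spanq\cF$. Each condition $\lambda b_i\in V_{M-h}$ has codimension $h$, and $dh<M$. Hence some nonzero $\lambda$ sends all of $\spanq\cF$ to residues of degree at most $D-h$.
\item Replace each $f$ by the monic normalisation of $\lambda f\bmod P$. This preserves size and rank.
\item It also preserves the cover. Any good time $\alpha'$ for the new family can be replaced by a rational $a/P$ within $q^{-(M+1)}$; this error is invisible at scale $q^{-k}$ because $h\ge k-1$. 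The fractional part of $a\lambda/P$ is then a good time for $\cF$, a contradiction.
\item Iterating gives maximal degree less than $(k-1)d$.
\end{itemize}

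Next, the Chow--Rimani\'c small-degree theorem (\cref{thm:CR-small-degree}) gives $\delta\ge q^{-k}$ for large $q$ whenever the degree is at most $q/4$ and $\abs{\cF}<Q_k(q)$. Combined with the compression, this forces $d\ge q/(4(k-1))$ whenever $N<Q_k(q)$. If instead $N\ge Q_k(q)$, then already $S\ge q^{k-1}$.

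Only after this does your final step apply, namely $L\ll_k\log q$ together with \cref{thm:rank-intro}. That part of your proposal is fine.
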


For $k=2$, a different argument gives a stronger estimate on the natural linear scale.  For $u\ge0$, define
\begin{equation}\label{eq:vartheta-def}
 \vartheta(u):=
 \max_{h\in\mathbb N}
 \frac{2h-u}{h(h+1)}.
\end{equation}
Equivalently, if $h-1\le u\le h$, then
\begin{equation}\label{eq:vartheta-piecewise}
 \vartheta(u)=\frac{2h-u}{h(h+1)}.
\end{equation}
For $0<\lambda<1$, put
\begin{align}
 \widetilde a(x)
 &:=x\vartheta(x),\label{eq:atilde-def}\\
 \widetilde b_\lambda(x)
 &:=\lambda+(x-\lambda)(1-\lambda)
 \vartheta\left(\frac{x-\lambda}{1-\lambda}\right),
 \label{eq:btilde-def}\\
 \widetilde{\cC}(\lambda)
 &:=\lambda^3+
 \lambda\int_\lambda^{1/\lambda}
 \max\set{\widetilde a(x),\widetilde b_\lambda(x)}\,dx,
 \label{eq:Ctilde-lambda}
\end{align}
and extend continuously by
$\widetilde{\cC}(0)=\widetilde{\cC}(1)=1$.  Define
\begin{equation}\label{eq:c2tilde}
 \widetilde c_2
 :=\min_{0\le\lambda\le1}\widetilde{\cC}(\lambda).
\end{equation}

\begin{theorem}\label{thm:c2-intro}
As $q\to\infty$ through prime powers,
\begin{equation}\label{eq:c2-intro}
 \liminf_{q\to\infty}
 \frac{C_2(q)-q^2}{q}
 \ge\widetilde c_2.
\end{equation}
Numerically,
\[
 \widetilde c_2=0.5829944375\ldots,
 \qquad
 \widetilde\lambda_*=0.4067490\ldots.
\]
Moreover, $\widetilde c_2>0.5278$, so the theorem gives a rigorous improvement over \eqref{eq:CR-k2-intro}.
\end{theorem}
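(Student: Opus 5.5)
The plan is a second-moment covering argument on the coefficient space, sharpened by integer multiplicities, followed by optimisation over one parameter $\lambda$. First we reduce to linear algebra, as Chow--Rimani\'c do. Since $\delta(\cF)<q^{-2}$ means $\delta(\cF)\le q^{-3}$, truncate $\alpha$ to its first $n$ digits, where $n$ exceeds $2+\max\deg f$. For each $f\in\cF$, the condition $\norm{\alpha f}\le q^{-3}$ says that the first two digits of $\{\alpha f\}$ vanish. These are two linear forms in the coefficient vector of $\alpha$, so they cut out a codimension-$\le2$ subspace $K_f\subset\Fq^{n}$, namely the partial-circulant kernel. Covering means $\bigcup_f K_f=\Fq^n$. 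Let $m(x)=\#\{f:x\in K_f\}\ge1$. Then $\sum_x m(x)=Nq^{n-2}$ (ignoring degenerate kernels, which only help), so the excess $S=N-q^2$ equals $q^{-(n-2)}\sum_x(m(x)-1)$. Any lower bound on overlap therefore gives a lower bound on $S$.

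Next we organise the overlaps by codimension-four structure. Two kernels $K_f,K_g$ meet in codimension $3$ or $4$. Codimension $3$ happens exactly when $f,g$ satisfy a low-degree syzygy, i.e.\ a common linear form; this is the sunflower or packet structure. We would group $\cF$ into maximal "stars": sets of kernels sharing a common codimension-$1$ hyperplane and pairwise meeting in codimension $3$. Let $\lambda$ denote the normalised size $|\text{star}|/q$ of the dominant configuration. The integer refinement replaces Cauchy--Schwarz $\sum m^2\ge(\sum m)^2/|\cdot|$ by the inequality
\[
 m-1\ \ge\ \frac{(2h-u)\,\cdot\,\text{stuff}}{h(h+1)},
\]
which is the best linear lower bound for an integer-valued $m\ge1$ with prescribed first and second moments. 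Precisely, for integer $m\ge1$ and any $h\in\mathbb N$ we have $(m-h)(m-h-1)\ge0$. This yields $m-1\ge \frac{2h\,m - m^2 \cdot(\ldots)}{h(h+1)}$ in the normalisation where $u$ is the local pair density. Maximising over $h$ produces $\vartheta(u)$. On each hyperplane fibre we apply this with $u=x$, where $x$ is the local second-moment density, which gives the term $\widetilde a(x)$. Alternatively we first remove the star's contribution $\lambda$ and apply it to the remainder at density $(x-\lambda)/(1-\lambda)$, which gives $\widetilde b_\lambda(x)$. On each fibre we take the better of the two.

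Finally we integrate over the fibres. The distribution of local densities $x$ is constrained to $[\lambda,1/\lambda]$ because the pair count is controlled by the star size; the leftover contribution from the star itself contributes $\lambda^3$. This gives $S\ge(\widetilde{\cC}(\lambda)-o(1))q$, and minimising over $\lambda$ gives $\widetilde c_2$. The numerical values and the bound $\widetilde c_2>0.5278$ come from evaluating the piecewise-rational integral with interval arithmetic.

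The main obstacle is the middle step. We must show rigorously that the fibrewise density profile satisfies exactly the constraints encoded by the range $[\lambda,1/\lambda]$ and the weight $\lambda$, uniformly for all families including high-degree ones, with error $o(q)$. For this we would first try a fibration of $\Fq^n$ along the common hyperplane of the largest star. We would then use the codimension-four count of Chow--Rimani\'c to bound pair intersections outside the star.
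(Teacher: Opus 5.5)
\paragraph{Verdict: gap in the middle step.} Your reduction to a covering by the kernels $K_f$, the identity $\sum_x(m(x)-1)=Sq^{n-2}$, and the integer inequality $(z-h)(z-h-1)\ge0$ leading to $\vartheta$ are correct. The middle step, which you flag yourself, is missing. Nothing in the proposal produces $\widetilde{\cC}(\lambda)$, and the structure you suggest would not.

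First, the relevant Chow--Rimani\'c sunflowers are the wrong objects in your sketch. They are families of kernels pairwise meeting in \emph{one common codimension-four subspace}, which is the generic pair size. They are not ``stars'' of kernels pairwise meeting in codimension three along a common hyperplane. Accordingly $\lambda=r/q$, where $r$ is the maximal size of such a codimension-four sunflower.

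Second, applying the integer inequality to the global multiplicity $m$ gives nothing of order $q$. Globally, $\sum_x m(x)(m(x)-1)=\sum_{f\ne g}\abs{K_f\cap K_g}$ is of order $q^n$, which dwarfs the excess $Sq^{n-2}$. Even in the generic case $M_1\approx M_2\approx q^n$, so the support bound is only about $q^n\vartheta(1)=q^n/2$, which is vacuous. Your fibrewise variant is not specified enough to evaluate, and nothing in it would produce the range $[\lambda,1/\lambda]$ or the weight $\lambda$. The quantity to control is local to each new kernel, and its second moment involves \emph{triple} intersections.

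\paragraph{The paper's mechanism: the Chow--Rimani\'c sequential ordering.} Work in $V_{D+2}$, so that $\abs{K_f}=q^D$.
\begin{itemize}
\item The $j$th kernel contributes $q^D-\abs{K_j\cap\bigcup_{i<j}K_i}$ new points. This overlap is at least the support of $Z=\sum_i\one_{V_i}$ on $K_j$, where $V_i=K_i\cap K_j$ runs over $t$ distinct intersections. Each has size $q^{D-2}$, and two of them meet in at most $q^{D-3}$ points.
\item Hence $M_1=tq^{D-2}$ and $M_2\le t(t-1)q^{D-3}$, and the integer inequality gives an overlap of at least $q^{D-1}\widetilde a(t/q)$.
\item The value of $t$ comes from the Chow--Rimani\'c pigeonhole: $t$ distinct intersections are guaranteed once $j-1>(t-1)(r-1)$.
\item This pigeonhole is why $x=t/q$ runs from $\lambda$ (at $j\approx r^2$, the end of the initial phase) to $1/\lambda$ (at $j\approx q^2$). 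Each unit increase of $t$ costs $r-1\approx\lambda q$ kernels, which gives the factor $\lambda$ in front of the integral.
\item The envelope $\widetilde b_\lambda$ uses that the first $r$ distinct intersections form a sunflower in $K_j$ with core of size at most $q^{D-3}$. The integer inequality is then applied to the sets $V_{r+j'}\setminus\bigcup_{i\le r}V_i$.
\item The term $\lambda^3$ is the loss in the first $1+r(r-1)$ kernels.
\end{itemize}

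\paragraph{Further points you still need.}
\begin{itemize}
\item The change-point observation: after the change point, every new kernel meets its predecessors in at least $q^{D-1}$ points. Since $\widetilde a,\widetilde b_\lambda\le1$, the bounds persist past that point.
\item The cases $r=q+1$ and $r\ge q+2$, which your parametrisation $\lambda\in[0,1]$ omits, must be treated separately. The Chow--Rimani\'c estimates give $R>q^2+q$ there.
\item For $\widetilde c_2>0.5278$, interval arithmetic would work if carried out. The paper instead uses $\vartheta(u)\ge1/(1+u)$ to get $\widetilde{\cC}(\lambda)\ge1-\lambda^2+\lambda^3+\lambda\log\lambda$. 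This function is strongly convex, and the paper evaluates it near $\lambda=12/25$.
\end{itemize}
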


The exact result is the variational formula \eqref{eq:c2tilde}; the decimal value is included only for orientation and is reproduced by the accompanying file
\texttt{evaluate\_c2\_constant.py}.

The final unconditional result identifies the first algebraic obstruction to a bound on the $q^{k-1}$ scale.  Let
$A_k:=\Fq[T]_{<k}$ and define
\begin{equation}\label{eq:Rk-intro}
 \cR_k(m,f,g)
 :=\set{(A,B,C)\in A_k^3:Am+Bf+Cg=0}.
\end{equation}

\begin{theorem}\label{thm:syzygy-intro}
If $\dim_{\Fq}\cR_k(m,f,g)\ge2$, then at least one of the following holds:
\begin{enumerate}[label=\textup{(\roman*)}]
 \item there is a nonzero relation
 $Am+Bf+Cg=0$ with
 $\deg A,\deg B,\deg C\le k-2$;
 \item there are a nonzero polynomial $P$ and polynomials
 $m_0,f_0,g_0$ of degree at most $2k-2$ such that
 \[
  m=Pm_0,
  \qquad
  f=Pf_0,
  \qquad
  g=Pg_0.
 \]
\end{enumerate}
\end{theorem}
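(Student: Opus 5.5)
The plan is to take two $\Fq$-linearly independent relations and form their cross product. Either the cross product vanishes, which gives alternative (i) by a degree-lowering combination, or it is nonzero, in which case it determines $(m,f,g)$ up to a common factor and gives alternative (ii).

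Concretely, I would argue over the field $K=\Fq(T)$ and use unique factorisation in $\Fq[T]$. If $m=f=g=0$, then (ii) holds trivially with $m_0=f_0=g_0=0$, so assume $v:=(m,f,g)\neq0$. Choose two $\Fq$-linearly independent elements $r_1=(A_1,B_1,C_1)$ and $r_2=(A_2,B_2,C_2)$ of $\cR_k(m,f,g)$. Each entry has degree $\le k-1$, and both satisfy $r_i\cdot v=0$. Form the cross product
\[
 u:=r_1\times r_2=(B_1C_2-B_2C_1,\;C_1A_2-C_2A_1,\;A_1B_2-A_2B_1),
\]
whose entries are polynomials of degree $\le 2k-2$.

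\emph{Case $u\neq0$.} Then $r_1,r_2$ are $K$-linearly independent, so their orthogonal complement in $K^3$ is the line spanned by $u$. Since $v$ lies in this complement and $v\neq0$, $v$ and $u$ are $K$-proportional. Write $u=D\,u_0$ and $v=P\,v_0$, where $D=\gcd$ of the entries of $u$, $P=\gcd(m,f,g)$, and $u_0,v_0$ are primitive. By Gauss's lemma, two $K$-proportional primitive vectors in $\Fq[T]^3$ differ by a unit, that is, by a factor in $\Fq^\times$. Absorbing that scalar into $P$, we get $v_0=u_0$. Each entry of $u_0$ divides the corresponding entry of $u$ (or is $0$), so it has degree $\le 2k-2$. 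This gives (ii) with $(m_0,f_0,g_0)=u_0$.

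\emph{Case $u=0$.} Then $r_1,r_2$ are $K$-proportional. Let $w$ be the primitive vector in $\Fq[T]^3$ spanning their common $K$-line. Again by Gauss's lemma, $r_i=h_iw$ with $h_i\in\Fq[T]$, and $\deg h_i+\deg w\le k-1$, where $\deg w$ denotes the maximum entry degree. Put $e:=k-1-\deg w$. Since $r_1,r_2$ are $\Fq$-independent, so are $h_1,h_2$, and both have degree $\le e$. A nonzero $\Fq$-combination $h=ah_1+bh_2$ of degree $\le e-1$ then exists:
\begin{itemize}
 \item if both $h_i$ have degree exactly $e$, cancel the leading coefficients;
 \item otherwise, take the $h_i$ of smaller degree.
\end{itemize}
Then $hw=ar_1+br_2$ is a nonzero element of $\cR_k(m,f,g)$ with all entries of degree $\le e-1+\deg w=k-2$, which is (i).

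The only step needing care is the primitive-part comparison in the first case. It relies on the standard fact that $\Fq[T]$ is a UFD, so primitive vectors are determined by their $K$-line up to a scalar. The vanishing of the degenerate case is the degree-lowering trick, and everything else is bookkeeping of degrees.
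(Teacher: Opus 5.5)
Your proof is correct and follows essentially the same route as the paper. You take the cross product of two $\Fq$-independent relations, split on whether they are $\Fq(T)$-dependent, and use Gauss's lemma to extract the common factor $P$ with bounded quotients in the nondegenerate case. The one difference is in the degenerate case: the paper notes that one of $h_1,h_2$ must be nonconstant, so the primitive generator $w$ itself already has all entries of degree at most $k-2$. Your leading-coefficient cancellation is therefore valid but unnecessary; its only extra payoff is that the resulting low-degree relation lies in the $\Fq$-span of $r_1,r_2$.
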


Under an explicit quantitative hypothesis excluding a positive density of incoherent packets of these two types, we prove that every such covering family has size
$q^k+(1/2-o(1))q^{k-1}$.  The conditional statement appears in \cref{sec:packets}.

\subsection{The method}

The first step is the partial-circulant covering formulation of Chow and Rimani\'c.  If all speeds have degree at most $D$, set
\[
 V_{D+k}:=\Fq[T]_{<D+k},
 \qquad
 U_f:=fA_k,
 \qquad
 K_f:=U_f^\perp.
\]
Then $K_f$ has codimension $k$, and
\[
 \delta(\cF)<q^{-k}
 \quad\Longleftrightarrow\quad
 \bigcup_{f\in\cF}K_f=V_{D+k}.
\]
Thus a counterexample to the desired loneliness estimate is exactly a cover by a restricted family of codimension-$k$ subspaces.

Let
\[
 \Phi(x):=\sum_{f\in\cF}\one_{K_f}(x)
\]
be the covering multiplicity.  Its mean is
$Nq^{-k}=1+S/q^k$.  The central advantage of the function-field setting is the exact Fourier identity
\begin{equation}\label{eq:exact-Fourier-intro}
 \wh\Phi(\xi)
 =q^{-k}\abs{\set{f\in\cF:\xi\in U_f}}.
\end{equation}
Consequently, every Fourier coefficient is nonnegative, every nonzero coefficient is at least $q^{-k}$, and the Fourier support is the finite union
$\bigcup_fU_f$.  Positivity, quantisation, and finite support are the three features that allow the argument to treat all Fourier levels simultaneously.

Choose linearly independent speeds $m_1,\ldots,m_d$ and consider the surjective map
\[
 \Lambda(x)
 :=(\angles{m_1,x},\ldots,\angles{m_d,x})
 \in\Fq^d.
\]
For $b\in\Fq^d$, the density
\[
 R_b^+(x):=q^d\one_{\{\Lambda(x)=b\}}
\]
is a positive $q$-ary phase product of mean one.  The covering inequality gives a nonnegative function
\[
 Y(b):=\E_x\Phi(x)R_b^+(x)-1
\]
on $\Fq^d$ with mean $S/q^k$, and its nonzero Fourier coefficients are exactly
\[
 \wh Y(a)=\wh\Phi\left(\sum_i a_im_i\right).
\]
Wolff's sharp dependence of finite-space hypercontractivity on the least atom
\cite{Wolff} gives the level estimate
\begin{equation}\label{eq:level-energy-intro}
 \sum_{|\supp a|=s}
 \abs{\wh\Phi\left(\sum_i a_im_i\right)}^2
 \ll
 \left(\frac{S}{q^k}\right)^2
 \left(
   \frac{CqL}{\log(2q)}
 \right)^s.
\end{equation}

The negative product is built from the one-coordinate density
\[
 r_\rho(y)
 :=1-\frac{\rho}{q-1}
 \sum_{c\in\Fq^\times}\psi(cy),
\]
which equals $1-\rho$ at $y=0$ and
$1+\rho/(q-1)$ otherwise.  Testing $\Phi\ge1$ against
$\prod_i r_\rho(\angles{m_i,x})$, and using Fourier quantisation to convert \eqref{eq:level-energy-intro} from $\ell^2$ to $\ell^1$, gives
\begin{equation}\label{eq:master-intro}
 0\le
 S-c\rho d
 +CS^2\rho^2
 \left(\frac{L}{\log(2q)}\right)^2.
\end{equation}
Optimising in $\rho$ proves \cref{thm:rank-intro}.  The factor $q$ in the $q$-ary hypercontractive estimate is cancelled by the coefficient $(q-1)^{-1}$ in the negative product.  This cancellation is what removes the logarithmic loss from the fixed-$k$ theorem.

To prove \cref{thm:fixed-k-intro}, we must rule out low-rank covers.  An arbitrary vector-space compression would destroy the blocks $fA_k$.  Instead, reduction modulo an irreducible polynomial gives a map compatible with multiplication by $T$.  Iterating the construction transforms a rank-$d$ cover into one whose maximum degree is less than $(k-1)d$.  The small-degree theorem of Chow and Rimani\'c then forces $d\gg_kq$, after which \cref{thm:rank-intro} gives an excess of order $q^{2/3}$.

It is useful to compare this route with Tao's argument in the classical setting.  There, large third moments of the Bohr-covering multiplicity are converted, through Bohr-set--progression duality and pigeonholing, into relations
\[
 a_\ell v_\ell=n_{i,\ell}v_i+n_{j,\ell}v_j
\]
with $a_\ell=o(\log n)$.  The denominators therefore range over a polylogarithmic set, and after placing the numerators in one short progression a sieve for medium-sized prime factors forces additional overlaps \cite{Tao}.  The moment-to-relation step is formally cleaner over function fields, since rank defects give polynomial syzygies $Af+Bg+Ch=0$ with $A,B,C\in A_k$.  The next step is harder, however: low degree does not mean few labels when $q$ grows, since already one coefficient ranges over $q^k$ possibilities; scalar multiples describe the same projective relation; and labels attached to overlapping triples need not agree.  Thus the local syzygies do not automatically assemble into a common short one-dimensional model on which Tao's pigeonhole and sieve can operate.  Bedert's Riesz-product method bypasses this labelled-agreement obstruction in the large-rank branch: it tests the pointwise cover against a probability density whose first Fourier layer accumulates over many independent directions, while dissociation---and here exact Fourier quantisation---controls the higher layers \cite{Bedert}.  The remaining syzygy-agreement problem is discussed in \cref{sec:packets}.

The proof of \cref{thm:c2-intro} is independent of the phase argument.  Its new input is the pointwise inequality
\begin{equation}\label{eq:integer-multiplicity-intro}
 \one_{\{z>0\}}
 \ge
 \frac{2z}{h+1}
 -\frac{z(z-1)}{h(h+1)}
 \qquad
 (z\in\mathbb Z_{\ge0},\ h\in\mathbb N).
\end{equation}
Unlike Cauchy--Schwarz, this inequality uses that a covering multiplicity is integer-valued.  Optimising over $h$ produces the function $\vartheta$ in \eqref{eq:vartheta-def}, and hence the improved envelopes
$\widetilde a$ and $\widetilde b_\lambda$ throughout the Chow--Rimani\'c sunflower ordering.

Finally, higher moments of the covering multiplicity detect tuples for which the sum of the blocks $fA_k$ has unexpectedly small rank.  Such rank defects carry coefficient labels in $A_k$.  The two-syzygy theorem identifies the first possible packet types; the remaining difficulty is a labelled local-to-global agreement theorem ensuring that many local relations belong to a controlled collection of coherent packets.

\subsection{Status of the statements and organisation}

Theorems \ref{thm:counterexample-intro}--\ref{thm:syzygy-intro} are unconditional.  The counterexample is computer-assisted only through an exhaustive check of $128$ vectors by a short standard-library Python program.  The numerical approximation to $\widetilde c_2$ is not used in the exact theorem.  The half-way result in \cref{sec:packets} is explicitly conditional on the packet-free hypothesis stated there.

\Cref{sec:covering} develops the function-field and Fourier formalism.  The counterexample is proved in \cref{sec:counterexample}.  The $q$-ary quantized phase--Riesz estimate is established in \cref{sec:phase}, and the compression argument is given in \cref{sec:compression}.  The sharper $k=2$ estimate occupies \cref{sec:k2}.  Syzygies and the conditional packet theorem are treated in \cref{sec:packets}.  We conclude with further questions and a statement on the accompanying code.

\section{The covering model and exact Fourier support}\label{sec:covering}

\subsection{Coefficient spaces and partial-circulant kernels}

For an integer $M\ge1$, let
\[
 V_M:=\Fq[T]_{<M}
 =\set{g\in\Fq[T]:\deg g<M}.
\]
We identify $V_M$ with $\Fq^M$ by the coefficient map and equip it with the nondegenerate bilinear form
\begin{equation}\label{eq:coefficient-pairing}
 \angles{g,h}
 :=\sum_{j=0}^{M-1}g_jh_j,
 \qquad
 g=\sum_{j=0}^{M-1}g_jT^j,
 \quad
 h=\sum_{j=0}^{M-1}h_jT^j.
\end{equation}
Fix $D\ge0$ and $k\ge1$, and suppose that every speed under consideration has degree at most $D$.  Set
\begin{equation}\label{eq:Uf-Kf}
 A_k:=\Fq[T]_{<k},
 \qquad
 U_f:=fA_k
 =\spanq\set{f,Tf,\ldots,T^{k-1}f}
 \le V_{D+k},
 \qquad
 K_f:=U_f^\perp.
\end{equation}
Multiplication by a nonzero polynomial is injective, so
\begin{equation}\label{eq:dimensions-U-K}
 \dim_{\Fq}U_f=k,
 \qquad
 \codim_{V_{D+k}}K_f=k,
 \qquad
 \abs{K_f}=q^D.
\end{equation}
The equations defining $K_f$ are the first $k$ rows of a circulant convolution matrix, which explains the term \emph{partial-circulant kernel}.

\begin{lemma}\label{lem:covering}
Let $\cF$ be a family of nonzero polynomials of degree at most $D$.  Then
\[
 \delta(\cF)\ge q^{-k}
 \quad\Longleftrightarrow\quad
 \bigcup_{f\in\cF}K_f\ne V_{D+k}.
\]
Equivalently,
\begin{equation}\label{eq:covering-equivalence}
 \delta(\cF)<q^{-k}
 \quad\Longleftrightarrow\quad
 \bigcup_{f\in\cF}K_f=V_{D+k}.
\end{equation}
\end{lemma}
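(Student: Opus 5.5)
Fix $\alpha\in\Torus$ and write $\alpha=\sum_{j\ge1}a_jT^{-j}$. For each $f\in\cF$, the claim $\norm{\alpha f}\le q^{-(k+1)}$ (equivalently $<q^{-k}$) will be recast as $k$ linear conditions on finitely many coefficients of $\alpha$, and these conditions will be matched with membership of a coefficient vector in $K_f$.

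\textbf{Step 1: the relevant coefficients.} Writing $f=\sum_{i=0}^{D}f_iT^i$ (with $f_i=0$ for $i>\deg f$), the coefficient of $T^{-j}$ in $\alpha f$ is
\[
 c_j(\alpha f)=\sum_{i=0}^{D}f_i\,a_{i+j}.
\]
Because of the discreteness of the norm, $\norm{\alpha f}<q^{-k}$ is equivalent to $c_1=\cdots=c_k=0$. These coefficients involve only $a_1,\ldots,a_{D+k}$. Encode this window as $x_\alpha:=\sum_{l=0}^{D+k-1}a_{l+1}T^l\in V_{D+k}$. As $\alpha$ ranges over $\Torus$, the vector $x_\alpha$ ranges over all of $V_{D+k}$, since the truncation map is surjective.

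\textbf{Step 2: identify the conditions with $K_f$.} For $0\le r\le k-1$, the coefficient pairing \eqref{eq:coefficient-pairing} gives
\[
 \angles{T^rf,x_\alpha}=\sum_i f_i\,a_{i+r+1}=c_{r+1}(\alpha f),
\]
because $T^rf$ has coefficient $f_i$ at $T^{i+r}$, and $i+r\le D+k-1$, so $T^rf$ lies in $V_{D+k}$. Hence
\[
 \norm{\alpha f}<q^{-k}\iff x_\alpha\perp\spanq\set{f,\ldots,T^{k-1}f}=U_f\iff x_\alpha\in K_f.
\]

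\textbf{Step 3: conclude.} Since $\cF$ is finite, the minimum over $f\in\cF$ is attained. Because the norm takes values in a discrete set whose only accumulation point is $0$, the supremum over $\alpha$ is also attained. Therefore $\delta(\cF)<q^{-k}$ holds if and only if every $\alpha$ has some $f$ with $\norm{\alpha f}<q^{-k}$. By Step 2, this says that every $x_\alpha$ lies in $\bigcup_fK_f$, and by the surjectivity from Step 1 this means $\bigcup_fK_f=V_{D+k}$. Negating both sides gives the first form of the lemma.

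The only delicate point is the index bookkeeping in Step 2: one must check that the pairing on $V_{D+k}$ reproduces exactly the convolution coefficients, with the offset $l=j-1$ chosen correctly. One must also check that $\deg(T^{k-1}f)\le D+k-1$, so that $U_f\le V_{D+k}$.
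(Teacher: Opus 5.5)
Your proof is correct and follows the paper's argument. Both identify the coefficients of $T^{-1},\ldots,T^{-k}$ in $\alpha f$ with the pairings $\angles{x,T^{s-1}f}$, where $x$ is the truncated coefficient vector of $\alpha$, so that $\norm{\alpha f}<q^{-k}$ holds exactly when $x\in K_f$. Your Step 3 also spells out the discreteness/attainment point and the surjectivity of $\alpha\mapsto x_\alpha$, which the paper leaves implicit.
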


\begin{proof}
Write
\[
 \alpha=\sum_{j=1}^{D+k}x_jT^{-j}
 +O(T^{-(D+k+1)})
\]
and let
$f=\sum_{i=0}^Da_iT^i$.  The coefficient of $T^{-s}$ in the fractional part of $\alpha f$ is
\begin{equation}\label{eq:partial-circulant-equations}
 \sum_{i=0}^Da_ix_{i+s},
 \qquad 1\le s\le k.
\end{equation}
If $x:=\sum_{j=0}^{D+k-1}x_{j+1}T^j\in V_{D+k}$, then the expression in \eqref{eq:partial-circulant-equations} is
$\angles{x,T^{s-1}f}$.  Hence
\[
 \norm{\alpha f}<q^{-k}
 \quad\Longleftrightarrow\quad
 x\in K_f.
\]
The coefficients of $\alpha$ beyond $T^{-(D+k)}$ do not affect these $k$ equations.  Therefore a time $\alpha$ with
$\norm{\alpha f}\ge q^{-k}$ for every $f\in\cF$ exists exactly when the corresponding coefficient vector lies outside every $K_f$.
\end{proof}

The covering formulation immediately recovers the elementary bounds in the introduction.

\begin{corollary}\label{cor:basic-Ck-bounds}
For every $q$ and $k\ge1$,
\begin{equation}\label{eq:basic-Ck-bounds}
 q^k+1\le C_k(q)\le Q_k(q).
\end{equation}
\end{corollary}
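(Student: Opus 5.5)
The plan is to prove the two inequalities in \eqref{eq:basic-Ck-bounds} separately, both through \cref{lem:covering}.

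\emph{Lower bound.} Let $\cF$ be any family with $\delta(\cF)<q^{-k}$, and let $D$ be the maximal degree of its members. By \eqref{eq:covering-equivalence}, the kernels $K_f$ with $f\in\cF$ cover $V_{D+k}$. Every kernel contains $0$, so $V_{D+k}\setminus\set{0}$ is covered by the sets $K_f\setminus\set{0}$. Counting points with \eqref{eq:dimensions-U-K} gives
\[
 q^{D+k}-1\le \abs{\cF}\,(q^D-1).
\]
Since $k\ge1$ and $q\ge2$, this forces $\abs{\cF}\ge (q^{D+k}-1)/(q^D-1)$.

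To see that the right-hand side exceeds $q^k$, note
\[
 \frac{q^{D+k}-1}{q^D-1} = q^k + \frac{q^k-1}{q^D-1}.
\]
When $D\ge1$, the last fraction is strictly positive, so $\abs{\cF}>q^k$ and hence $\abs{\cF}\ge q^k+1$ by integrality.

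The case $D=0$ must be handled separately, since then $q^D-1=0$. In that case every $K_f=\set{0}$, while $V_k\ne\set{0}$. So no covering exists, and this case cannot occur. The lower bound is therefore a plain union bound; the only step needing care is discarding the zero vector, which is what supplies the extra $+1$.

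\emph{Upper bound.} Take $\cF$ to be the set of monic polynomials of degree at most $k$, so $\abs{\cF}=Q_k(q)$ and $D=k$. By \cref{lem:covering}, it suffices to show that every $x\in V_{2k}$ lies in some $K_f$, i.e.\ is orthogonal to $f,Tf,\ldots,T^{k-1}f$.

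Write the coordinates of $x$ as $x_1,\ldots,x_{2k}$. By \eqref{eq:partial-circulant-equations}, the conditions read
\[
 \sum_{i=0}^{k}a_i\,x_{i+s}=0,\qquad 1\le s\le k.
\]
This is a homogeneous linear system in the $k+1$ unknowns $a_0,\ldots,a_k$, with $k$ equations. It therefore has a nonzero solution $(a_0,\ldots,a_k)$.

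Dividing this solution by its highest nonzero coordinate produces a monic polynomial $f$ of degree at most $k$ with $x\in K_f$. Hence the kernels of this family cover $V_{2k}$, so $\delta(\cF)<q^{-k}$ and $C_k(q)\le Q_k(q)$.

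Both parts are routine; the only point to verify is that the rescaled solution is monic of degree at most $k$, which is immediate.
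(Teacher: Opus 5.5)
Your proposal is correct and follows essentially the same route as the paper: the upper bound is the same count of $k$ homogeneous equations in the $k+1$ coefficients of a polynomial of degree at most $k$, with the nonzero solution normalised to be monic. For the lower bound, the paper argues that $N=q^k$ would force the covering multiplicity to be identically one, which fails at the origin where it equals $N$; you instead discard the origin before applying the union bound, which is the same observation and yields the slightly sharper $\abs{\cF}\ge q^k+(q^k-1)/(q^D-1)$, at the cost of treating $D=0$ separately.
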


\begin{proof}
Every $K_f$ has density $q^{-k}$ in $V_{D+k}$.  Thus a cover by $N$ kernels requires $Nq^{-k}\ge1$, and hence $N\ge q^k$.  Equality cannot occur: if $N=q^k$, then the covering multiplicity has mean one and is pointwise at least one, so it is identically one; at the origin, however, all $N$ kernels meet and the multiplicity equals $N>1$.  Therefore $N\ge q^k+1$.

For the upper bound, fix $\alpha\in\Torus$.  The conditions that the coefficients of
$T^{-1},\ldots,T^{-k}$ in $\alpha f$ vanish are $k$ homogeneous linear equations in the $k+1$ coefficients of
$f\in\Fq[T]_{\le k}$.  They admit a nonzero solution, which may be normalised to a monic polynomial of degree at most $k$.  Hence the family of all monic polynomials of degree at most $k$ has loneliness at most $q^{-(k+1)}$ and cardinality $Q_k(q)$.
\end{proof}

\subsection{The covering multiplicity}

Assume from now on that the kernels associated with $\cF$ cover $V_{D+k}$.  Write
\begin{equation}\label{eq:Phi-def}
 \Phi(x):=\sum_{f\in\cF}\one_{K_f}(x),
 \qquad
 N:=\abs{\cF},
 \qquad
 S:=N-q^k,
 \qquad
 \eps:=\frac{S}{q^k}.
\end{equation}
Then
\begin{equation}\label{eq:Phi-cover-mean}
 1\le\Phi(x)\le N,
 \qquad
 \E_x\Phi(x)=Nq^{-k}=1+\eps.
\end{equation}
In particular, $S>0$.  Indeed, if $S=0$, then
$\Phi\ge1$ and $\E\Phi=1$, so $\Phi\equiv1$, whereas
$\Phi(0)=N=q^k>1$.

The pointwise inequality in \eqref{eq:Phi-cover-mean} may be tested against any probability density.  Namely, if
$R:V_{D+k}\to[0,\infty)$ and $\E R=1$, then
\begin{equation}\label{eq:probability-density-test}
 1\le\E_x\Phi(x)R(x).
\end{equation}
The uniform choice $R\equiv1$ gives the union bound.  The role of a Riesz product is to construct a nonnegative density with a negative first Fourier layer while controlling the higher layers created by positivity.

\subsection{Fourier transform}

Write $q=p^r$ and fix the canonical nontrivial additive character
\[
 \psi(u):=
 \exp\left(\frac{2\pi i}{p}\Tr_{\Fq/\Fp}(u)\right).
\]
For $\xi,x\in V_{D+k}$, put
\[
 \chi_\xi(x):=\psi(\angles{\xi,x}),
 \qquad
 \wh g(\xi):=\E_xg(x)\overline{\chi_\xi(x)}.
\]

\begin{lemma}\label{lem:fourier-subspace}
For every nonzero polynomial $f$ of degree at most $D$ and every
$\xi\in V_{D+k}$,
\begin{equation}\label{eq:fourier-subspace}
 \wh{\one_{K_f}}(\xi)=q^{-k}\one_{U_f}(\xi).
\end{equation}
Consequently,
\begin{equation}\label{eq:Phi-hat}
 \wh\Phi(\xi)=q^{-k}r_{\cF}(\xi),
 \qquad
 r_{\cF}(\xi):=
 \abs{\set{f\in\cF:\xi\in U_f}}.
\end{equation}
In particular,
\begin{equation}\label{eq:quantisation}
 \wh\Phi(\xi)\ge0,
 \qquad
 \wh\Phi(\xi)>0\Longrightarrow\wh\Phi(\xi)\ge q^{-k},
 \qquad
 \supp\wh\Phi=\bigcup_{f\in\cF}U_f.
\end{equation}
\end{lemma}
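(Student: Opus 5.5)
The proof is the standard computation of the Fourier transform of the indicator of a subspace. After that, the statements about $\Phi$ follow by linearity.

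Fix $f$ and write $K:=K_f=U_f^\perp$, where orthogonality is taken with respect to the nondegenerate pairing \eqref{eq:coefficient-pairing} on $V_{D+k}$. By definition,
\[
 \wh{\one_{K}}(\xi)=q^{-(D+k)}\sum_{x\in K}\overline{\psi(\angles{\xi,x})}.
\]
The map $x\mapsto\angles{\xi,x}$ restricted to $K$ is an $\Fq$-linear functional on $K$, so there are two cases.

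If $\xi\in K^\perp$, the functional vanishes on $K$, and the sum equals $\abs K=q^D$ by \eqref{eq:dimensions-U-K}. This gives $\wh{\one_K}(\xi)=q^{-k}$.

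If $\xi\notin K^\perp$, the functional is surjective onto $\Fq$, and each fibre has size $\abs K/q$. The sum then equals $(\abs K/q)\sum_{u\in\Fq}\overline{\psi(u)}$. This vanishes because $\psi$ is a nontrivial character of $(\Fq,+)$: the trace $\Tr_{\Fq/\Fp}$ is surjective, so $\psi$ is not identically $1$.

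It remains to check that $K^\perp=U_f$. Since the pairing is nondegenerate on the finite-dimensional space $V_{D+k}$, we have $\dim W^\perp=\dim V_{D+k}-\dim W$ for every subspace $W$. Hence $(U_f^\perp)^\perp\supseteq U_f$ and both sides have the same dimension, so $(U_f^\perp)^\perp=U_f$. This proves \eqref{eq:fourier-subspace}.

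For \eqref{eq:Phi-hat}, sum \eqref{eq:fourier-subspace} over $f\in\cF$ using linearity of the Fourier transform. The consequences in \eqref{eq:quantisation} then read off directly:
\begin{itemize}
 \item $r_{\cF}(\xi)$ is a nonnegative integer, so $\wh\Phi(\xi)\ge0$.
 \item If $\wh\Phi(\xi)>0$, then $r_{\cF}(\xi)\ge1$, so $\wh\Phi(\xi)\ge q^{-k}$.
 \item $r_{\cF}(\xi)>0$ exactly when $\xi\in U_f$ for some $f$, so $\supp\wh\Phi=\bigcup_f U_f$.
\end{itemize}
Positivity means there is no cancellation between different $f$, which is what makes the support exactly this union.

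The only point needing care is the double-orthogonal identity $K^\perp=U_f$ and the nontriviality of $\psi$. Both are routine; there is no real obstacle.
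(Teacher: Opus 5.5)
Your proof is correct and follows the paper's argument: identify the annihilator of $K_f=U_f^\perp$ as $U_f$, observe that averaging a character over the subspace $K_f$ gives $q^{-k}$ or $0$ according to whether the character is trivial on $K_f$, and then sum over $f$. You supply the details the paper leaves implicit, namely the double-orthogonal dimension count and the nontriviality of $\psi$ via surjectivity of the trace.
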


\begin{proof}
The annihilator of $K_f=U_f^\perp$ is $U_f$.  Averaging a character over $K_f$ gives the density $q^{-k}$ when the character is trivial on $K_f$, and zero otherwise.  This proves \eqref{eq:fourier-subspace}; summing over $f$ gives \eqref{eq:Phi-hat} and \eqref{eq:quantisation}.
\end{proof}

The exact formula \eqref{eq:Phi-hat} is the decisive simplification relative to the real-circle problem.  The Fourier transform is finitely supported, has a fixed positive quantum, and carries no signs.  These properties will be used simultaneously in \cref{sec:phase}.

\section{A counterexample over \texorpdfstring{$\mathbb F_2$}{F2}}\label{sec:counterexample}

We now prove \cref{thm:counterexample-intro}.  Let
\begin{align*}
\cF_*:=\{\,&1,\ T^2+T,\ T^2+T+1,\ T^3,\\
& T^3+T^2+1,\ T^3+T+1,\ T^3+T^2+T+1,\\
& T^4+T^2,\ T^4+T,\ T^4+T^3+1,\\
& T^4+T^2+1,\ T^4+T+1,\ T^4+T^3+T^2+T+1\,\}.
\end{align*}
Thus $\abs{\cF_*}=13$, the maximum degree is $D=4$, and we take $k=3$.  The coefficient space is
$V_7\cong\mathbb F_2^7$.

If
\[
 f=a_0+a_1T+\cdots+a_4T^4,
 \qquad
 x=(x_0,\ldots,x_6)\in\mathbb F_2^7,
\]
then $x\in K_f$ exactly when
\begin{equation}\label{eq:counter-kernel}
 \sum_{j=0}^4a_jx_j=0,
 \qquad
 \sum_{j=0}^4a_jx_{j+1}=0,
 \qquad
 \sum_{j=0}^4a_jx_{j+2}=0.
\end{equation}

\begin{proposition}\label{prop:certificate}
The thirteen kernels associated with $\cF_*$ cover $\mathbb F_2^7$.  Their covering-multiplicity distribution is
\begin{center}
\begin{tabular}{c|rrrrrr}
\toprule
Multiplicity & $1$ & $2$ & $3$ & $4$ & $7$ & $13$\\
\midrule
Number of vectors & $83$ & $26$ & $15$ & $2$ & $1$ & $1$\\
\bottomrule
\end{tabular}
\end{center}
In particular, no vector has multiplicity zero.
\end{proposition}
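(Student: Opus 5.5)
The statement is a finite certificate, so the proof is an exhaustive verification organised to be checkable by hand or by the accompanying script. For each $f\in\cF_*$ I will write down the three linear forms in \eqref{eq:counter-kernel}. These are the coefficient vectors of $f$, $Tf$ and $T^2f$ in $\mathbb F_2^7$, and they span $U_f$. The membership test is then $x\in K_f$ if and only if $\angles{x,f}=\angles{x,Tf}=\angles{x,T^2f}=0$. For each of the $2^7=128$ vectors $x$ I will compute $\Phi(x)=\#\{f\in\cF_*:x\in K_f\}$ and tabulate the distribution. By \cref{lem:covering}, the covering claim, and with it $\delta(\cF_*)<2^{-3}$, follows once every entry is at least one.

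I will run several consistency checks on the table before trusting it.

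\begin{itemize}
\item The total count must be $83+26+15+2+1+1=128$.
\item The first moment must satisfy $\sum_x\Phi(x)=13\cdot 2^4=208$, since each $K_f$ has $q^D=16$ elements by \eqref{eq:dimensions-U-K}. The table gives $83+52+45+8+7+13=208$, which matches.
\item The second moment $\sum_x\Phi(x)^2=\sum_{f,g}\abs{K_f\cap K_g}=128\sum_{f,g}2^{-\dim(U_f+U_g)}$ gives an independent check through the ranks of the pairwise sums $U_f+U_g$. The table predicts $83+104+135+32+49+169=572$.
\end{itemize}

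Here $\Phi(0)=13$, as it must be. The vector of multiplicity $7$ should be located explicitly, together with the seven $f$ whose kernels contain it, as a sanity check on the computation.

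To cut the hand work, I will exploit structure rather than brute force where convenient. The kernels are subspaces, so I can enumerate each $K_f$ as the $16$-element null space of a $3\times 7$ matrix over $\mathbb F_2$, found by row reduction. I then take the union and count multiplicities. I expect the only real obstacle to be bookkeeping, namely making sure no vector is missed. The second-moment identity, which is computable from pairwise ranks alone, is the cross-check I would rely on to certify the multiplicity table independently of the direct enumeration.

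For the equality $\delta(\cF_*)=2^{-4}$, and not merely the covering, I also need a time $\alpha$ with $\norm{\alpha f}\ge 2^{-4}$ for all $f$. Applying \cref{lem:covering} with $k=4$ reduces this to exhibiting a single $x\in V_8$ outside every four-row kernel. By the union bound this is automatic, since $13<2^4$: the $13$ kernels of codimension $4$ cannot cover $\mathbb F_2^8$. Hence $\delta(\cF_*)\ge 2^{-4}$, and the covering at $k=3$ gives $\delta(\cF_*)\le 2^{-4}$.
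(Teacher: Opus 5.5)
Your proposal is correct and is essentially the paper's proof: an exhaustive check of the $128$ vectors of $\mathbb F_2^7$ against the three linear forms in \eqref{eq:counter-kernel}, with the incidence count $208=13\cdot 2^4$ as a consistency check. The table does come out as stated; for instance, the multiplicity-$7$ vector is $x=(0,\dots,0,1)$, which lies exactly in the kernels of the seven speeds of degree at most $3$.

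Two small remarks. First, the moment identities, including your second-moment value $572$, are only consistency checks and not an independent certificate. Three moments do not determine the distribution, and they are even compatible with a vector of multiplicity zero, so the enumeration itself must carry the proof. Second, your union-bound argument for $\delta(\cF_*)\ge 2^{-4}$ (thirteen codimension-$4$ kernels cannot cover $\mathbb F_2^8$) is a valid alternative to the paper's explicit choice $\alpha=T^{-4}$.
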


\begin{proof}
Substitution of the thirteen coefficient vectors into
\eqref{eq:counter-kernel}, followed by enumeration of the
$2^7=128$ possible vectors, gives the displayed distribution.  The accompanying file \texttt{verify\_counterexample.py} performs exactly this check using only bit operations and the Python standard library.  As a consistency check, the total incidence count is
\[
 83+2\cdot26+3\cdot15+4\cdot2+7+13
 =208
 =13\cdot2^4,
\]
which is the sum of the sizes of the thirteen four-dimensional kernels.
\end{proof}

\begin{proof}[Proof of \cref{thm:counterexample-intro}]
By \cref{lem:covering,prop:certificate},
$\delta(\cF_*)<2^{-3}$, and the discreteness of the norm gives
$\delta(\cF_*)\le2^{-4}$.

For the reverse inequality, take $\alpha=T^{-4}$.  Every member of
$\cF_*$ has degree at most four, and $T^4$ itself is not in the family.  Hence the fractional part of $T^{-4}f$ is nonzero and involves only the powers
$T^{-1},\ldots,T^{-4}$; consequently
$\norm{T^{-4}f}\ge2^{-4}$ for every $f\in\cF_*$.  The speed $f=1$ gives equality.  Therefore
\[
 \min_{f\in\cF_*}\norm{T^{-4}f}=2^{-4},
\]
which proves $\delta(\cF_*)=2^{-4}$.  Finally,
$13<1+2+4+8=15$.
\end{proof}

\begin{remark}
The counterexample occurs in characteristic two and shows that the original all-$q$, all-$k$ formulation is false.  It does not conflict with any of the sufficiently-large-$q$ results proved below.  A corrected conjecture may exclude finitely many pairs $(q,k)$ or may assert the projective threshold only after $q$ is sufficiently large in terms of $k$.
\end{remark}

\section{Quantized \texorpdfstring{$q$}{q}-ary phase--Riesz products}\label{sec:phase}

We prove \cref{thm:rank-intro}.  The positive product below is supported on fibres of a linear map to $\Fq^d$.  It converts the covering excess into Fourier energy on every Hamming level.  A second product has a negative first level and coefficients of size $(q-1)^{-s}$ at level $s$.

Let
\begin{equation}\label{eq:D-independent}
 \cD=\set{m_1,\ldots,m_d}\subset\cF
\end{equation}
be linearly independent over $\Fq$, with $d=d(\cF)$.  For
$a=(a_1,\ldots,a_d)\in\Fq^d$, write
\begin{equation}\label{eq:Aa-def}
 Aa:=\sum_{i=1}^da_im_i,
 \qquad
 |a|_0:=\abs{\set{i:a_i\ne0}}.
\end{equation}
The map $a\mapsto Aa$ is injective.

\subsection{The positive phase product}

Define
\begin{equation}\label{eq:Lambda-def}
 \Lambda:V_{D+k}\longrightarrow\Fq^d,
 \qquad
 \Lambda(x):=
 (\angles{m_1,x},\ldots,\angles{m_d,x}).
\end{equation}
The nondegeneracy of the coefficient pairing and the linear independence of the $m_i$ imply that $\Lambda$ is surjective.  For $b=(b_1,\ldots,b_d)\in\Fq^d$, set
\begin{equation}\label{eq:qary-positive-product}
 R_b^+(x)
 :=q^d\one_{\{\Lambda(x)=b\}}.
\end{equation}
Equivalently,
\begin{equation}\label{eq:qary-positive-product-expanded}
 R_b^+(x)
 =\prod_{i=1}^d
 \left(
   \sum_{c\in\Fq}
   \psi\bigl(c(\angles{m_i,x}-b_i)\bigr)
 \right).
\end{equation}

\begin{lemma}\label{lem:qary-positive}
For every $b\in\Fq^d$,
\[
 R_b^+(x)\ge0,
 \qquad
 \E_xR_b^+(x)=1.
\]
\end{lemma}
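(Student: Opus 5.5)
Nonnegativity is immediate from the definition \eqref{eq:qary-positive-product}: $R_b^+$ equals $q^d$ times an indicator function, so it only takes the values $0$ and $q^d$. The expanded form \eqref{eq:qary-positive-product-expanded} is consistent with this, since the character orthogonality relation $\sum_{c\in\Fq}\psi(cy)=q\one_{\{y=0\}}$ turns each factor into $q\one_{\{\angles{m_i,x}=b_i\}}$. I will record this orthogonality relation as part of the argument, because the equivalence of the two displayed expressions is used later when we expand in Fourier modes.

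For the mean, note that $\E_xR_b^+(x)=q^d\Pr_x[\Lambda(x)=b]$. As observed just before the lemma, $\Lambda$ is a surjective linear map $V_{D+k}\to\Fq^d$. Every fibre $\Lambda^{-1}(b)$ is therefore a coset of $\ker\Lambda$, and $\abs{\ker\Lambda}=q^{D+k-d}$. The fibre thus has density exactly $q^{-d}$ in $V_{D+k}$, which gives $\E_xR_b^+=1$.

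The only nontrivial input is the surjectivity of $\Lambda$. If I had to justify it, I would argue as follows. Suppose $\sum_i c_i\angles{m_i,x}=\angles{\sum_ic_im_i,x}$ vanished identically in $x$. Nondegeneracy of the coefficient pairing would then force $\sum_ic_im_i=0$, and linear independence of the $m_i$ would give $c=0$. Hence the $d$ linear functionals $x\mapsto\angles{m_i,x}$ are linearly independent, and so $\Lambda$ has rank $d$.

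Alternatively, one can average the product expansion \eqref{eq:qary-positive-product-expanded} directly. Expanding it gives a sum over $c\in\Fq^d$ of $\psi(-c\cdot b)\chi_{Ac}(x)$. Averaging over $x$ kills every term with $Ac\ne0$, and injectivity of $a\mapsto Aa$ leaves only $c=0$, so again the mean is $1$. There is no real obstacle in either route.
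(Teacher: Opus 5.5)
Your proof is correct and follows the paper's argument: character orthogonality identifies the product expansion with $q^d\one_{\{\Lambda(x)=b\}}$, giving nonnegativity, and surjectivity of $\Lambda$ gives each fibre density $q^{-d}$, hence mean one. Your explicit justification of surjectivity from nondegeneracy and linear independence, and your alternative route via averaging the expansion using injectivity of $a\mapsto Aa$, are both valid; they simply spell out details the paper leaves implicit.
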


\begin{proof}
The character sum in the $i$th factor of
\eqref{eq:qary-positive-product-expanded} equals $q$ when
$\angles{m_i,x}=b_i$ and zero otherwise, proving
\eqref{eq:qary-positive-product}.  Since $\Lambda$ is surjective, every fibre has density $q^{-d}$, and the asserted normalisation follows.
\end{proof}

Define
\begin{equation}\label{eq:Y-def}
 Y(b):=\E_x\Phi(x)R_b^+(x)-1.
\end{equation}
For $G:\Fq^d\to\mathbb C$, use the Fourier convention
\begin{equation}\label{eq:phase-Fourier}
 \wh G(a)
 :=\E_{b\in\Fq^d}
 G(b)\overline{\psi(a\cdot b)},
 \qquad
 a\cdot b:=\sum_{i=1}^da_ib_i.
\end{equation}

\begin{lemma}\label{lem:Y-small}
The function $Y$ satisfies
\begin{equation}\label{eq:Y-small}
 0\le Y(b)\le N-1,
 \qquad
 \E_bY(b)=\eps.
\end{equation}
Moreover, for every nonzero $a\in\Fq^d$,
\begin{equation}\label{eq:qary-phase-identification}
 \wh Y(a)=\wh\Phi(Aa).
\end{equation}
\end{lemma}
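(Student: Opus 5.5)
The plan is to handle the pointwise bounds and the mean first, then compute the Fourier coefficients by expanding the product \eqref{eq:qary-positive-product-expanded} and applying \cref{lem:fourier-subspace}.

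\emph{Pointwise bounds.} By \cref{lem:qary-positive}, $R_b^+$ is a probability density. Combined with $1\le\Phi\le N$ from \eqref{eq:Phi-cover-mean} and the test inequality \eqref{eq:probability-density-test}, this gives
\[
1\le\E_x\Phi R_b^+\le N,
\]
that is, $0\le Y(b)\le N-1$.

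\emph{Mean.} Average over $b$ and use Fubini. Since the fibres of $\Lambda$ partition $V_{D+k}$, we have $\E_bR_b^+(x)=q^{-d}\cdot q^d=1$ for every $x$. Hence
\[
\E_bY(b)=\E_x\Phi(x)-1=\eps.
\]

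\emph{Fourier identity.} Expand
\[
R_b^+(x)=\sum_{c\in\Fq^d}\psi\bigl(c\cdot\Lambda(x)\bigr)\overline{\psi(c\cdot b)}.
\]
Bilinearity of the pairing gives $c\cdot\Lambda(x)=\angles{Ac,x}$, so
\[
R_b^+(x)=\sum_{c}\chi_{Ac}(x)\overline{\psi(c\cdot b)}.
\]
Taking $\E_x$ against $\Phi$ yields
\[
\E_x\Phi R_b^+=\sum_{c}\overline{\wh\Phi(-Ac)}\,\overline{\psi(c\cdot b)}.
\]
This step needs care with conjugation. Because $\Phi$ is real, $\E_x\Phi\chi_\xi=\overline{\wh\Phi(\xi)}$. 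By \cref{lem:fourier-subspace}, $\wh\Phi$ is real and nonnegative, and it is symmetric under $\xi\mapsto-\xi$ since each $U_f$ is a subspace. Hence $\E_x\Phi\chi_{Ac}=\wh\Phi(Ac)$, and therefore
\[
1+Y(b)=\sum_{c\in\Fq^d}\wh\Phi(Ac)\,\psi(-c\cdot b).
\]
Now apply the convention \eqref{eq:phase-Fourier}. Orthogonality of characters on $\Fq^d$ picks out the coefficient of $\psi(a\cdot b)$, so $\widehat{1+Y}(a)=\wh\Phi(-Aa)=\wh\Phi(Aa)$, again using the symmetry of $\wh\Phi$. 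For $a\ne0$ the constant $1$ contributes nothing, which gives \eqref{eq:qary-phase-identification}.

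\emph{Consistency check at $a=0$.} The same formula gives $\widehat{1+Y}(0)=\wh\Phi(0)=Nq^{-k}=1+\eps$, which matches the mean computed above.

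The only delicate point is the sign and conjugation bookkeeping in the Fourier identity. Both issues are neutralised by the reality, nonnegativity and $\pm$-symmetry of $\wh\Phi$ from \cref{lem:fourier-subspace}, so no real obstacle is expected.
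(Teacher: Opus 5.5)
Your proof is correct and matches the paper's argument for the pointwise bounds and the mean; for the Fourier identity the paper averages the indicator form $q^d\one_{\{\Lambda(x)=b\}}$ against $\overline{\psi(a\cdot b)}$ in $b$ first, obtaining $\overline{\chi_{Aa}(x)}$ at once, so it needs neither your character expansion nor the reality and evenness of $\wh\Phi$. One small slip: $\E_x\Phi\chi_{Ac}$ equals $\wh\Phi(-Ac)$, not $\overline{\wh\Phi(-Ac)}$ as you display it; this is harmless here because $\wh\Phi$ is real and even, and keeping $\wh\Phi(-Ac)$ would in fact give $\wh\Phi(Aa)$ directly without any symmetry.
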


\begin{proof}
By \cref{lem:qary-positive}, $R_b^+$ is a probability density in $x$.  Since $1\le\Phi\le N$, its average against this density lies in $[1,N]$, proving the pointwise bounds.  Averaging the fibre densities over $b$ gives
$\E_bR_b^+(x)=1$, and hence
\[
 \E_bY(b)=\E_x\Phi(x)-1=\eps.
\]
For $a\ne0$, the constant $-1$ in \eqref{eq:Y-def} has zero Fourier coefficient.  Using \eqref{eq:qary-positive-product},
\begin{align*}
 \wh Y(a)
 &=\E_x\Phi(x)
   \E_bR_b^+(x)\overline{\psi(a\cdot b)}\\
 &=\E_x\Phi(x)\overline{\psi(a\cdot\Lambda(x))}\\
 &=\E_x\Phi(x)\overline{\chi_{Aa}(x)}
 =\wh\Phi(Aa).
\end{align*}
\end{proof}

\subsection{A level inequality on $\Fq^d$}

For $s\ge0$, let $H_s$ be the span of the characters
$b\mapsto\psi(a\cdot b)$ with $|a|_0=s$, and let $P_s$ be the orthogonal projection onto $H_s$.

\begin{lemma}\label{lem:qary-hypercontractive}
For every $r\ge2$, every $s\ge0$, and every $g\in H_s$,
\begin{equation}\label{eq:qary-hypercontractive}
 \norm g_r
 \le
 \left(
   C\sqrt{\frac{qr}{\log(2q)}}
 \right)^s
 \norm g_2,
\end{equation}
where $C$ is an absolute constant.
\end{lemma}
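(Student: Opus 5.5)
This is the standard level-$s$ hypercontractive inequality on the product space $\Fq^d$ with uniform measure. We derive it from Wolff's sharp hypercontractivity constant for a finite probability space with least atom $\alpha=1/q$. Here is the plan.

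\emph{Step 1: one coordinate.} Apply Wolff's theorem \cite{Wolff} to $\Fq$ with uniform measure. It gives that the noise operator $T_\rho=\sum_s\rho^sP_s$ on functions of one coordinate is $(2\to r)$-hypercontractive for all
\[
 \rho\le\rho_r(q)\asymp\sqrt{\frac{\log(2q)}{qr}} .
\]
More precisely, Wolff identifies the optimal $(2,r)$ constant for minimal atom $\alpha$ as $\rho^2\asymp\alpha^{1-2/r}\log(1/\alpha)\cdot\frac{1}{r}$ in the relevant range. For $q\ge2$ this is bounded below by $c\log(2q)/(qr)$, after absorbing the factor $\alpha^{-2/r}\ge1$ favourably. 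For $q=2$ this is Bonami's bound $\rho=(r-1)^{-1/2}$, which has the same shape.

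\emph{Step 2: tensorisation.} Hypercontractivity of $T_\rho$ from $L^2$ to $L^r$ tensorises over product spaces; this is the standard Minkowski-integral-inequality induction on coordinates. So $\norm{T_\rho g}_r\le\norm g_2$ on $\Fq^d$ for the same $\rho$. The operator $T_\rho$ acts on $H_s$ as multiplication by $\rho^s$, since $H_s$ is spanned by characters with exactly $s$ nontrivial coordinates, and each nontrivial one-coordinate character lies in the mean-zero part.

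\emph{Step 3: conclusion.} For $g\in H_s$ write $g=\rho^{-s}T_\rho g$. Then $\norm g_r\le\rho^{-s}\norm g_2$, and $\rho^{-1}\le C\sqrt{qr/\log(2q)}$, which is \eqref{eq:qary-hypercontractive}. The case $s=0$ is trivial because $g$ is constant.

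\emph{Main obstacle.} The only delicate point is extracting the uniform lower bound on the one-coordinate hypercontractive parameter from Wolff's result, valid for all $r\ge2$ and all $q$ with an absolute constant. Two regimes need care: $r$ close to $2$, where the exact constant degenerates differently, and $r$ large compared with $\log q$.

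For $r\le 3$ I would instead use the elementary fallback: the one-coordinate $(2,r)$ parameter is at least that for $(2,4)$ by monotonicity in $r$. For large $r$ I would check Wolff's formula directly, or alternatively use the crude bound that any one-coordinate function satisfies $\norm{f-\E f}_\infty\le\sqrt q\,\norm f_2$. If these regimes give a slightly worse power of $r$, this only affects the absolute constant, provided the leading dependence $q/\log(2q)$ on $q$ is preserved, which is what matters for the later cancellation with $(q-1)^{-1}$.
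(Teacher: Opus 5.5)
\textbf{Overall.} Your proof has the same structure as the paper's: a one-coordinate $L^2\to L^r$ contraction for the noise operator on $\Fq$ with least atom $1/q$ taken from Wolff, then tensorisation, then the fact that the noise operator acts by $\rho^s$ on $H_s$. The problem is in the one step with real content, the lower bound on the one-coordinate parameter.

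\textbf{The one-coordinate bound.} The asymptotic you attribute to Wolff, $\rho^2\asymp\alpha^{1-2/r}\log(1/\alpha)/r$, is false when $r\lesssim\log(1/\alpha)$. The sharp two-point value is $\rho^2=\sinh(u/r)/\sinh(u/r')$, with $u=\log((1-\alpha)/\alpha)$ and $r'=r/(r-1)$. It behaves differently in two regimes:
\begin{itemize}
 \item for $r\gtrsim\log(1/\alpha)$ it is $\asymp\alpha\log(1/\alpha)/r$, matching your formula;
 \item for $r\lesssim\log(1/\alpha)$ it is only $\asymp\alpha^{1-2/r}$, without your extra factor $\log(1/\alpha)/r\ge1$.
\end{itemize}
The uniform space on $\Fq$ cannot do better than the two-point space, because the noise operator preserves functions of the partition $\{x_0\}$, $\Fq\setminus\{x_0\}$. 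Concretely, at $r=4$, test $1+\beta g$ with $g$ the mean-zero, unit-norm function equal to $\sqrt{q-1}$ at $x_0$, and let $\beta\to\infty$. This gives $\rho^2\le\sqrt q/(q-1)$, while your formula predicts order $q^{-1/2}\log q$.

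So ``absorbing $\alpha^{-2/r}\ge1$'' does not justify the bound when $r\lesssim\log q$. What is needed there is the comparison the paper makes:
\begin{itemize}
 \item for $r\le\log q$, $\rho^2\gtrsim q^{-1+2/r}\ge c\log(2q)/(qr)$, because $e^{2t}\ge ct$ with $t=\log q/r\ge1$;
 \item for $r>\log q$, the two-point value is directly $\asymp\log q/(qr)$.
\end{itemize}
The regime that needs care is therefore $r\lesssim\log q$, not $r$ near $2$. With this correction your Steps 2 and 3 go through as in the paper.

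\textbf{The fallbacks.} Neither would repair this.
\begin{itemize}
 \item The bound $\norm{f-\E f}_\infty\le\sqrt q\,\norm f_2$ is a norm comparison on a single coordinate, not an $L^2\to L^r$ contraction of the noise operator. It does not tensorise to $H_s$, and it cannot supply the $\sqrt{\log(2q)}$ gain.
 \item A worse power of $r$ is not absorbed by the absolute constant. The factor is raised to the power $s$, and \cref{lem:qary-level} takes $r\asymp L$, which is of order $\log q$ for fixed $k$. Replacing $\sqrt r$ by $r$ would turn the per-level ratio $L/\log(2q)$ into $L^2/\log(2q)\asymp\log q$. That ratio is what survives the cancellation with $(q-1)^{-1}$ in the negative product, so this would reintroduce exactly the logarithmic loss the lemma is meant to remove.
\end{itemize}
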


\begin{proof}
On the uniform one-coordinate probability space $\Fq$, consider the noise operator
\[
 T_\eta f:=\E f+\eta(f-\E f).
\]
Wolff's comparison theorem for finite probability spaces reduces the hypercontractive constant, up to an absolute factor, to the two-point space whose smaller atom has mass at least $1/q$; see \cite[Theorem~3.1]{Wolff}.  The two-point formula and its asymptotic forms in \cite[p.~221]{Wolff} then imply that for every $r\ge2$ there is an
\[
 \eta\ge c\sqrt{\frac{\log(2q)}{qr}}
\]
for which $T_\eta$ is a contraction from $L^2$ to $L^r$.  To see the two regimes explicitly, when $r\le\log q$ the relevant two-point constant is bounded below by a constant multiple of
$q^{-1/2+1/r}$, and
\[
 q^{-1/2+1/r}
 \ge c\sqrt{\frac{\log(2q)}{qr}}.
\]
When $r>\log q$, the two-point estimate is directly comparable to the displayed square root.  Enlarging the absolute constant handles the finitely many small values of $q$.

Tensorisation gives the same $L^2\to L^r$ contraction for the product operator on $\Fq^d$.  This operator acts by multiplication by $\eta^s$ on $H_s$.  Hence
\[
 \eta^s\norm g_r\le\norm g_2,
\]
which proves \eqref{eq:qary-hypercontractive}.  Applying the real-valued estimate to the real and imaginary parts changes only the absolute constant.
\end{proof}

\begin{lemma}\label{lem:qary-level}
Let $F:\Fq^d\to[0,1]$ and let $\E F=\mu$.  Then, for every integer $s\ge1$,
\begin{equation}\label{eq:qary-level}
 \norm{P_sF}_2^2
 \le
 \mu^2
 \left(
   C\frac{q}{\log(2q)}
   \log\frac e\mu
 \right)^s.
\end{equation}
The right-hand side is interpreted as zero when $\mu=0$.
\end{lemma}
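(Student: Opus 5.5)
We will prove \eqref{eq:qary-level} by the standard level-$s$ inequality argument: duality plus hypercontractivity, as in the Boolean case. Assume $\mu>0$ and put $g:=P_sF\in H_s$. By orthogonality of the projections,
\[
 \norm g_2^2=\angles{F,g}_{L^2}=\E_bF(b)\overline{g(b)}.
\]
Apply Hölder with exponents $r'$ and $r$, where $r\ge2$ is to be chosen:
\[
 \norm g_2^2\le\norm F_{r'}\norm g_r .
\]
Since $0\le F\le1$, we have $\E F^{r'}\le\E F=\mu$, and hence $\norm F_{r'}\le\mu^{1-1/r}$.

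Next apply \cref{lem:qary-hypercontractive} to $g$. This gives
\[
 \norm g_r\le\Bigl(C\sqrt{qr/\log(2q)}\Bigr)^s\norm g_2 .
\]
Dividing by $\norm g_2$ (the case $g=0$ is trivial) and squaring, we obtain
\[
 \norm{P_sF}_2^2\le\mu^{2-2/r}\Bigl(C^2\frac{qr}{\log(2q)}\Bigr)^s .
\]

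It remains to choose $r$. We take $r:=\max\set{2,\,2\log(e/\mu)}$. Then $\mu^{-2/r}\le e^{\log(e/\mu)\cdot 2/r}\le e$. Also $r\le2\log(e/\mu)$, because $\log(e/\mu)\ge1$. The leftover factor $e\le e^s$ for $s\ge1$ is absorbed into the constant raised to the power $s$, which yields
\[
 \norm{P_sF}_2^2\le\mu^2\Bigl(C'\frac{q}{\log(2q)}\log\frac e\mu\Bigr)^s .
\]
This is \eqref{eq:qary-level}.

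The main point to check is that \cref{lem:qary-hypercontractive} holds uniformly for all real $r\ge2$ with the stated dependence. It is stated that way, so the choice $r\asymp\log(e/\mu)$ is legitimate even when $\mu$ is tiny. The complex-valued nature of $g$ was already absorbed into the constant in that lemma.
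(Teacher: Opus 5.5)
Your proof is correct and follows essentially the same route as the paper: Hölder duality against a level-$s$ function, the bound $\norm F_{r'}\le\mu^{1-1/r}$ from $0\le F\le1$, \cref{lem:qary-hypercontractive}, and the choice $r\asymp\log(e/\mu)$ so that $\mu^{-1/r}=O(1)$. The differences are cosmetic. You test against $g=P_sF$ directly instead of taking a supremum over unit vectors in $H_s$. You also take the real exponent $r=2\log(e/\mu)$ rather than $\max\{2,\lceil\log(e/\mu)\rceil\}$, which is legitimate since the hypercontractive lemma is stated for all real $r\ge2$.
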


\begin{proof}
The assertion is immediate if $\mu=0$.  Suppose that $\mu>0$.  By duality,
\[
 \norm{P_sF}_2
 =\sup_{g\in H_s,\ \norm g_2=1}
 \abs{\angles{F,g}}.
\]
Choose
\[
 r:=\max\set{2,\left\lceil\log\frac e\mu\right\rceil},
 \qquad
 p:=\frac r{r-1}.
\]
Since $0\le F\le1$,
\[
 \norm F_p\le\mu^{1/p}
 =\mu^{1-1/r}\le e\mu.
\]
H\"older's inequality and \cref{lem:qary-hypercontractive} give
\[
 \abs{\angles{F,g}}
 \le
 e\mu
 \left(
   C\sqrt{\frac{qr}{\log(2q)}}
 \right)^s.
\]
Squaring and using
$r\ll\log(e/\mu)$ proves \eqref{eq:qary-level}.
\end{proof}

Put
\begin{equation}\label{eq:L-def}
 L:=\log\left(\frac{eN}{\eps}\right)
 =\log\left(\frac{eNq^k}{S}\right).
\end{equation}

\begin{proposition}\label{prop:all-level-energy}
For every $s\ge1$,
\begin{equation}\label{eq:all-level-energy}
 \sum_{\substack{a\in\Fq^d\\|a|_0=s}}
 \abs{\wh\Phi(Aa)}^2
 \le
 \eps^2
 \left(
   C\frac{qL}{\log(2q)}
 \right)^s.
\end{equation}
\end{proposition}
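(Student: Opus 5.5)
The idea is to apply \cref{lem:qary-level} to the normalised function $F:=Y/(N-1)$ on $\Fq^d$ and then transfer the bound back to $\Phi$ through the identification \eqref{eq:qary-phase-identification}.

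By \cref{lem:Y-small}, $0\le Y\le N-1$, so $F$ takes values in $[0,1]$ and has mean
\[
 \mu=\frac{\eps}{N-1}.
\]
(If $N=1$ the cover is impossible, since $S>0$ forces $N\ge q^k+1\ge2$; so this is well defined.) For $s\ge1$, Parseval on $\Fq^d$ with the convention \eqref{eq:phase-Fourier} gives
\[
 \norm{P_sY}_2^2=\sum_{|a|_0=s}\abs{\wh Y(a)}^2 .
\]
Every such $a$ is nonzero, so \eqref{eq:qary-phase-identification} identifies the right-hand side with $\sum_{|a|_0=s}\abs{\wh\Phi(Aa)}^2$, which is the left side of \eqref{eq:all-level-energy}. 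Consequently
\[
 \sum_{|a|_0=s}\abs{\wh\Phi(Aa)}^2=(N-1)^2\norm{P_sF}_2^2 .
\]
By \cref{lem:qary-level} this is at most
\[
 (N-1)^2\mu^2\left(C\frac{q}{\log(2q)}\log\frac{e}{\mu}\right)^s
 =\eps^2\left(C\frac{q}{\log(2q)}\log\frac{e(N-1)}{\eps}\right)^s .
\]

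It remains to compare the logarithm with $L$. Since $\log(e(N-1)/\eps)\le\log(eN/\eps)=L$, the bound \eqref{eq:all-level-energy} follows with the same absolute constant $C$.

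One point needs checking: \cref{lem:qary-level} requires $\mu\le1$ so that $\log(e/\mu)\ge1$. This holds because $\E Y=\eps\le N-1$ follows from $Y\le N-1$.

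I do not expect a real obstacle here. The only care needed is in the normalisation, which must be by $N-1$ rather than $N$ so that $F\le1$, and in confirming that the Parseval convention matches \eqref{eq:phase-Fourier}, namely averaging over $b$ and summing over $a$.
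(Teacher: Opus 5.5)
Your proof is correct and follows the paper's argument: apply \cref{lem:qary-level} to a rescaling of $Y$, use Parseval on the $s$th level, and transfer the result via \eqref{eq:qary-phase-identification}. The paper rescales by $N$ instead of $N-1$, which also gives $0\le F\le1$ because $Y\le N-1$, and it produces the logarithm $\log(eN/\eps)=L$ directly; so your remark that the normalisation \emph{must} be by $N-1$ is inaccurate, though it does no harm to the argument.
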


\begin{proof}
Apply \cref{lem:qary-level} to $F=Y/N$, whose mean is
$\mu=\eps/N$.  Parseval on the $s$th phase level gives
\[
 \frac1{N^2}
 \sum_{|a|_0=s}\abs{\wh Y(a)}^2
 \le
 \frac{\eps^2}{N^2}
 \left(
   C\frac{qL}{\log(2q)}
 \right)^s.
\]
Cancel $N^{-2}$ and use
\eqref{eq:qary-phase-identification}.
\end{proof}

The factor $\eps^2$ in \eqref{eq:all-level-energy} is the essential gain: it is forced by the nonnegativity and small mean of the phase function $Y$.

\subsection{The negative product}

For $0\le\rho\le1$ and $y\in\Fq$, define
\begin{equation}\label{eq:qary-one-coordinate-negative}
 r_\rho(y)
 :=1-\frac{\rho}{q-1}
 \sum_{c\in\Fq^\times}\psi(cy).
\end{equation}
The elementary character sum gives
\begin{equation}\label{eq:qary-one-coordinate-values}
 r_\rho(y)=
 \begin{cases}
  1-\rho,&y=0,\\[1mm]
  1+\dfrac{\rho}{q-1},&y\ne0.
 \end{cases}
\end{equation}
In particular, $r_\rho\ge0$ and $\E_y r_\rho(y)=1$.  Set
\begin{equation}\label{eq:qary-negative-product}
 R_\rho^-(x)
 :=\prod_{i=1}^d
 r_\rho(\angles{m_i,x}).
\end{equation}
Linear independence gives
$R_\rho^-\ge0$ and $\E_xR_\rho^-=1$.

For $s\ge1$, write
\begin{equation}\label{eq:Qs-def}
 Q_s
 :=\sum_{\substack{a\in\Fq^d\\|a|_0=s}}
 \wh\Phi(Aa).
\end{equation}
All summands are nonnegative by \eqref{eq:quantisation}.  Every nonzero coefficient
$u=\wh\Phi(Aa)$ is at least $q^{-k}$, and hence
$u\le q^ku^2$.  Therefore \cref{prop:all-level-energy} gives
\begin{equation}\label{eq:Q-s}
 Q_s
 \le
 q^k\eps^2
 \left(
   C\frac{qL}{\log(2q)}
 \right)^s.
\end{equation}

Expanding \eqref{eq:qary-negative-product}, using
$\wh\Phi(-\xi)=\wh\Phi(\xi)$, and testing the pointwise inequality
$\Phi\ge1$ against this probability density gives
\begin{equation}\label{eq:qary-riesz-test}
 0\le
 \eps
 -\frac{\rho}{q-1}
 \sum_{i=1}^d\sum_{c\in\Fq^\times}
 \wh\Phi(cm_i)
 +\sum_{\substack{s\ge2\\s\text{ even}}}
 \left(\frac{\rho}{q-1}\right)^sQ_s.
\end{equation}
All odd levels greater than one are nonpositive and have been discarded.  Since
$cm_i\in U_{m_i}$ for every $c\ne0$,
\[
 \wh\Phi(cm_i)\ge q^{-k}.
\]
The first level in \eqref{eq:qary-riesz-test} is therefore at most
$-\rho dq^{-k}$.  If
\begin{equation}\label{eq:qary-rho-condition}
 C\rho\frac{L}{\log(2q)}\le\frac12,
\end{equation}
then \eqref{eq:Q-s} turns the even-level series into a geometric series.  Since $q/(q-1)\le2$, we obtain
\begin{equation}\label{eq:master-eps}
 0\le
 \eps-c\rho dq^{-k}
 +Cq^k\eps^2\rho^2
 \left(\frac{L}{\log(2q)}\right)^2.
\end{equation}
Multiplication by $q^k$ yields
\begin{equation}\label{eq:master-S}
 0\le
 S-c\rho d
 +CS^2\rho^2
 \left(\frac{L}{\log(2q)}\right)^2.
\end{equation}

\begin{proof}[Proof of \cref{thm:rank-intro}]
The positivity of $S$ was proved after
\eqref{eq:Phi-cover-mean}.  Put
\[
 M:=\frac{L}{\log(2q)}.
\]
If $d\le C_0M$, then
$d^{2/3}M^{-2/3}=O(1)$, and the conclusion follows from the integer bound $S\ge1$ after reducing the implied constant.

Assume that $d>C_0M$ and choose
\begin{equation}\label{eq:rho-choice-rank}
 \rho:=\eta d^{-1/3}M^{-2/3},
\end{equation}
where $\eta>0$ is a sufficiently small absolute constant.  Then
$C\rho M\le1/2$, so \eqref{eq:master-S} applies.  If, to the contrary,
\[
 S<c_0d^{2/3}M^{-2/3},
\]
then the first positive term in \eqref{eq:master-S} is at most
$c_0d^{2/3}M^{-2/3}$, while the quadratic term is at most
\[
 Cc_0^2\eta^2d^{2/3}M^{-2/3}.
\]
The negative term has size
$c\eta d^{2/3}M^{-2/3}$.  After fixing $\eta$ and then taking $c_0$ sufficiently small, the two positive terms are strictly smaller than the negative term, a contradiction.  This proves \eqref{eq:rank-intro}.
\end{proof}

\begin{remark}
The use of all $q$ phase values is essential for the improvement.  A binary or Steinhaus phase coordinate produces a level constant of order $L$, whereas the $q$-ary level estimate produces $qL/\log(2q)$.  The negative product contributes the compensating factor $(q-1)^{-s}$ at level $s$, leaving only $L/\log(2q)$.  For fixed $k$, this ratio is bounded, and the logarithmic loss disappears.
\end{remark}

\section{\texorpdfstring{$T$}{T}-compatible compression}\label{sec:compression}

The estimate in \cref{thm:rank-intro} is strongest when the linear rank is large.  We now show that a low-rank covering family can be compressed into the degree range where the theorem of Chow and Rimani\'c rules it out.  The compression must preserve multiplication by the powers
$1,T,\ldots,T^{k-1}$; an arbitrary linear isomorphism of coefficient spaces would not preserve the blocks $fA_k$.

\subsection{Rational approximation}

\begin{lemma}\label{lem:rational-approx}
Let $P\in\Fq[T]$ be monic and irreducible of degree $M$.  For every
$\alpha\in\Torus$, there is a polynomial $a$ of degree less than $M$ such that
\begin{equation}\label{eq:rational-approx}
 \norm{\alpha-a/P}\le q^{-(M+1)}.
\end{equation}
\end{lemma}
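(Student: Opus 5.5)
The plan is a dimension count: I will show that the fractions $a/P$ with $\deg a<M$ already realise every possible string of the first $M$ digits of an element of $\Torus$. Irreducibility of $P$ plays no role; only $\deg P=M$ and $P\ne0$ are used.

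Consider the $\Fq$-linear map
\[
 \tau:V_M\longrightarrow\Fq^M,
 \qquad
 a\longmapsto(\text{coefficients of }T^{-1},\ldots,T^{-M}\text{ in }a/P).
\]
For $a\in V_M$ we have $\abs{a/P}\le q^{M-1}/q^M<1$. Hence $a/P$ already lies in $\Torus$, and $\{a/P\}=a/P$, so $\tau$ just reads off the leading digits of $a/P$. Linearity of $\tau$ is clear.

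The key step is injectivity of $\tau$. If $a\ne0$, then $\abs a\ge1$ and so $\abs{a/P}\ge q^{-M}$. This means some coefficient of $T^{-j}$ with $1\le j\le M$ is nonzero, so $\tau(a)\ne0$. Since $\dim V_M=M=\dim\Fq^M$, the map $\tau$ is a bijection.

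Now write $\alpha=\sum_{j\ge1}x_jT^{-j}$. Surjectivity of $\tau$ gives $a\in V_M$ with $\tau(a)=(x_1,\ldots,x_M)$. Then $\alpha-a/P\in\Torus$ has vanishing coefficients of $T^{-1},\ldots,T^{-M}$. Its fractional part is therefore itself, of absolute value at most $q^{-(M+1)}$, which proves \eqref{eq:rational-approx}.

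I do not expect a genuine obstacle. The only point needing care is the non-Archimedean size estimate $q^{-M}\le\abs{a/P}<1$ for nonzero $a$. It simultaneously ensures that $a/P$ has zero polynomial part and that its first $M$ digits cannot all vanish.
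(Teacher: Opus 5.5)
Your proof is correct and is essentially the paper's argument: the map sending $a\in V_M$ to the first $M$ negative digits of $a/P$ is injective because $\abs{a/P}=q^{\deg a-M}\ge q^{-M}$ for $a\ne0$, hence bijective by dimension count, and you then match the first $M$ digits of $\alpha$. Your remark that irreducibility of $P$ is not used is also accurate; it matters only later, where $\Fq[T]/(P)$ must be a field.
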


\begin{proof}
The first $M$ negative coefficients of $a/P$ depend linearly on the $M$ coefficients of $a$.  This linear map is injective.  Indeed, if the first $M$ negative coefficients vanished for a nonzero $a$, then
$\abs{a/P}\le q^{-(M+1)}$, whereas directly
\[
 \abs{a/P}=q^{\deg a-M}\ge q^{-M}.
\]
The map is therefore bijective.  We may choose $a$ so that the first $M$ negative coefficients of $a/P$ agree with those of $\alpha$, which is exactly \eqref{eq:rational-approx}.
\end{proof}

\subsection{Degree reduction modulo an irreducible polynomial}

\begin{proposition}\label{prop:compression}
Fix $k\ge2$.  Let $\cF$ be a family of distinct monic polynomials, set
\[
 d:=d(\cF),
 \qquad
 D:=\max_{f\in\cF}\deg f,
\]
and suppose that $\delta(\cF)<q^{-k}$.  Then there is a family $\cF'$ of the same cardinality and the same linear rank such that
\begin{equation}\label{eq:compression-conclusion}
 \delta(\cF')<q^{-k},
 \qquad
 \max_{f'\in\cF'}\deg f'<(k-1)d.
\end{equation}
\end{proposition}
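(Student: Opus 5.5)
The plan is to lower the maximum degree one step at a time, keeping the cardinality, the linear rank and the covering property, until $D<(k-1)d$. Each step reduces modulo a suitable irreducible polynomial and twists by a unit that makes all speeds short at once.

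\emph{One reduction step.} Suppose $D\ge(k-1)d$. Put $M:=D+1$ and fix a monic irreducible $P$ of degree $M$; one exists for every $M\ge1$. Let $W:=\spanq\cF$, so $\dim W=d$, and fix a basis $m_1,\ldots,m_d$.

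\begin{enumerate}[label=\textup{(\alph*)}]
 \item \emph{Choosing the twist.} We look for a nonzero $u\in V_M$ such that the residue $(um_i\bmod P)$ has degree at most $M-k$ for every $i$. Each $i$ imposes $k-1$ homogeneous linear conditions on the $M$ coefficients of $u$, namely the vanishing of the coefficients of $T^{M-k+1},\ldots,T^{M-1}$. So there are $d(k-1)$ conditions in total. Since $M=D+1>(k-1)d$, a nonzero solution $u$ exists.
 \item \emph{Properties of $u$.} Because $P$ is irreducible and $\deg u<M$, $u$ is invertible modulo $P$; let $u'$ satisfy $uu'\equiv1\pmod P$. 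The map $\tau(f):=uf\bmod P$ is $\Fq$-linear and injective on $V_M$. Every $f\in W$ lies in $V_M$, since $\deg f\le D<M$, and therefore $\deg\tau(f)\le M-k<D$.
 \item \emph{The new family.} Set $\cF':=\{c_f\tau(f):f\in\cF\}$, where $c_f\in\Fq^\times$ makes each element monic. By injectivity of $\tau$, two distinct monic $f$ cannot have images that are scalar multiples of each other. Hence $\abs{\cF'}=\abs{\cF}$ and $d(\cF')=d$.
\end{enumerate}

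\emph{Covering is preserved.} Let $\alpha\in\Torus$. By \cref{lem:rational-approx}, choose $a$ with $\deg a<M$ and $\norm{\alpha-a/P}\le q^{-(M+1)}$. Apply the hypothesis $\delta(\cF)<q^{-k}$ at the point $\{au'/P\}$. This gives $f\in\cF$ with $\norm{(au'/P)f}\le q^{-(k+1)}$. Since $u'\tau(f)\equiv f\pmod P$, the quantities $(a/P)\tau(f)$ and $(au'/P)f$ differ by a polynomial. Hence
\[
\norm{(a/P)\tau(f)}\le q^{-(k+1)}.
\]
For the approximation error, $\abs{(\alpha-a/P)\tau(f)}\le q^{M-k}q^{-(M+1)}=q^{-(k+1)}$. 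The ultrametric inequality then gives $\norm{\alpha\,\tau(f)}\le q^{-(k+1)}<q^{-k}$. Scalars do not change the norm, so $\delta(\cF')<q^{-k}$.

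\emph{Iteration and the main obstacle.} Each step strictly lowers $D$ and preserves the hypotheses, so after finitely many steps $D<(k-1)d$, which is \eqref{eq:compression-conclusion}. The main obstacle is to balance the degree constraint in the twist step against the approximation step. The twist must land every residue in degree at most $M-k$: this is exactly what makes the error $q^{\deg\tau(f)-M-1}$ small enough. Dimension counting permits this precisely when $M>(k-1)d$, and that is where the threshold $(k-1)d$ comes from. A secondary point to verify is the choice $M=D+1$. It keeps the original speeds reduced modulo $P$, so $\tau$ is injective on $W$ and distinctness and rank survive. It also still yields a new degree $\le D+1-k<D$, because $k\ge2$.
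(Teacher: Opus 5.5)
Your construction is the paper's: reduce modulo a monic irreducible $P$ of degree $M=D+1$, twist by a unit that makes all of $\spanq\cF$ short, approximate a time by $a/P$, and pull back. The only structural difference is that you fix the shortening at $k-1$ top coefficients, whereas the paper uses $h=\lfloor D/d\rfloor$. This is harmless, since both iterations stop exactly when $D<(k-1)d$.

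\textbf{The pull-back step fails as written.} Since $\tau(f)\equiv uf\pmod P$, the correct congruence is
\[
 \frac aP\,\tau(f)\equiv\frac{au}{P}\,f\pmod{\Fq[T]}.
\]
The relation $u'\tau(f)\equiv f$ only gives $(au'/P)\tau(f)\equiv(a/P)f$, which pairs the wrong two quantities. In fact
\[
 \frac aP\,\tau(f)-\frac{au'}{P}\,f\equiv\frac{a(u-u')f}{P}\pmod{\Fq[T]}.
\]
For $a\ne0$ this is not a polynomial unless $u^2\equiv1\pmod P$. For instance, over $\mathbb F_2$ take $P=T^2+T+1$, $u=T$, $u'=T+1$ and $a=f=1$; the difference is $1/P$. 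So your bound on $\norm{(au'/P)f}$ tells you nothing about $\norm{(a/P)\tau(f)}$.

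\textbf{The repair.} Apply the hypothesis $\delta(\cF)<q^{-k}$ at the point $\{au/P\}$ instead of $\{au'/P\}$. This gives some $f\in\cF$ with $\norm{(au/P)f}\le q^{-(k+1)}$, hence $\norm{(a/P)\tau(f)}\le q^{-(k+1)}$. Your approximation-error estimate and the ultrametric inequality then finish the step exactly as you wrote.

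This is precisely the paper's choice $\alpha=\{a\lambda/P\}$: it uses the twist itself, not its inverse. The inverse $u'$ is not needed anywhere, because injectivity of $\tau$ on $V_M$ only uses $P\nmid u$. With this correction your argument is complete and agrees with the paper's proof.
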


\begin{proof}
We first give one degree-reduction step.  Suppose that
\begin{equation}\label{eq:h-compression}
 h:=\left\lfloor\frac Dd\right\rfloor\ge k-1.
\end{equation}
Put $M=D+1$, choose a monic irreducible polynomial $P$ of degree $M$, and work in the field
\[
 E:=\Fq[T]/(P)\cong\mathbb F_{q^M}.
\]
Let $b_1,\ldots,b_d$ be a basis of
$W:=\spanq\cF$.  Set
\[
 L:=M-h,
 \qquad
 V_L:=\set{g\bmod P:\deg g<L}\le E.
\]
The space $V_L$ has codimension $h$ in $E$.  For each $i$, multiplication by the nonzero residue class $b_i$ is an automorphism of $E$, so
$b_i^{-1}V_L$ also has codimension $h$.  Hence
\begin{equation}\label{eq:compression-intersection}
 \dim_{\Fq}\bigcap_{i=1}^db_i^{-1}V_L
 \ge M-dh\ge1.
\end{equation}
Choose a nonzero residue class $\lambda$ in this intersection.  Since every $f\in W$ is a linear combination of the $b_i$, the residue
$\lambda f\bmod P$ belongs to $V_L$.  Let $g_f$ be its unique representative of degree less than $L$.  Multiplication by $\lambda$ is injective on $W$, so the polynomials $g_f$ are distinct and their span has dimension $d$.  Moreover, two of the $g_f$ cannot be nontrivial scalar multiples: otherwise $g_f=cg_{f'}$ would imply $f=cf'$ because all degrees are less than $M$, and the monicity of $f,f'$ would force $c=1$ and $f=f'$.  Normalising each nonzero $g_f$ by a scalar in $\Fq^\times$ therefore gives a distinct monic family $\cF_1$ of the same size and rank, with
\begin{equation}\label{eq:one-step-degree}
 \max_{g\in\cF_1}\deg g\le D-h.
\end{equation}

It remains to show that the covering property is preserved.  Suppose, to the contrary, that there is an
$\alpha'\in\Torus$ satisfying
\begin{equation}\label{eq:good-time-compressed}
 \norm{\alpha'g}\ge q^{-k}
 \qquad(g\in\cF_1).
\end{equation}
By \cref{lem:rational-approx}, choose $a$ of degree less than $M$ with
\[
 \norm{\alpha'-a/P}\le q^{-(M+1)}.
\]
For $g\in\cF_1$, \eqref{eq:one-step-degree} gives
$\deg g\le M-h-1$, and therefore
\[
 \norm{(\alpha'-a/P)g}
 \le q^{-(h+2)}<q^{-k}.
\]
The ultrametric inequality and \eqref{eq:good-time-compressed} imply
\begin{equation}\label{eq:rational-time-good}
 \norm{(a/P)g}=\norm{\alpha'g}\ge q^{-k}.
\end{equation}

Choose a polynomial representative of $\lambda$.  For each original speed $f$, the corresponding normalised polynomial satisfies
\[
 g_f\equiv c_f\lambda f\pmod P
 \qquad(c_f\in\Fq^\times).
\]
It follows that
\[
 \frac aP g_f
 \equiv
 c_f\frac{a\lambda}{P}f
 \pmod{\Fq[T]}.
\]
Let $\alpha$ be the fractional part of $a\lambda/P$.  Since scalar multiplication by $c_f$ does not change the norm, \eqref{eq:rational-time-good} gives
\[
 \norm{\alpha f}\ge q^{-k}
 \qquad(f\in\cF),
\]
contradicting $\delta(\cF)<q^{-k}$.  Thus
$\delta(\cF_1)<q^{-k}$.

Repeat the reduction while \eqref{eq:h-compression} holds.  At termination,
$\lfloor D/d\rfloor<k-1$, and hence $D<(k-1)d$.  The size and rank are preserved at every step, proving the proposition.
\end{proof}

\begin{remark}
The argument resembles a Freiman-model compression, but the reduction is carried out inside the field
$\Fq[T]/(P)$.  It therefore respects multiplication by $T$ modulo $P$, which is essential because the covering kernel attached to $f$ depends on the whole block
$f,Tf,\ldots,T^{k-1}f$.
\end{remark}

\subsection{The small-degree range}

Let $I_m(q)$ denote the number of monic irreducible polynomials of degree $m$ over $\Fq$.  We use the following theorem of Chow and Rimani\'c in the form stated in
\cite[Theorem~1.6]{ChowRimanic}.

\begin{theorem}[Chow--Rimani\'c]\label{thm:CR-small-degree}
Let $k>1$, and let $\cF$ be a family of nonzero polynomials of degree at most $D$.  If
\begin{equation}\label{eq:CR-small-degree-condition}
 \frac{I_{k+1}(q)}{q^k+q^{k-1}+\cdots+q}
 >\left\lfloor\frac D{k+1}\right\rfloor,
\end{equation}
then every family with $\abs{\cF}<Q_k(q)$ satisfies
$\delta(\cF)\ge q^{-k}$.
\end{theorem}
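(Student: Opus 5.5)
The plan is to look for a good time among the rational points $\alpha=a/P$, where $P$ is monic irreducible of degree $k+1$ and $a$ is a nonzero polynomial of degree at most $k$. The idea is to reduce the question to counting residue classes modulo $P$, in the spirit of \cref{lem:rational-approx}.

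\textbf{Step 1: when is a rational time bad for a given speed?} Fix such a $P$ and a speed $f\in\cF$. Write $r$ for the remainder of $af$ modulo $P$, so $\deg r<k+1$.
\begin{itemize}
 \item If $r\ne0$, then $\{af/P\}=r/P$ and $\norm{af/P}=q^{\deg r-(k+1)}$.
 \item Hence $\norm{(a/P)f}<q^{-k}$ holds exactly when $\deg r\le0$, that is, when $af\equiv c\pmod P$ for some $c\in\Fq$.
 \item If $P\nmid f$, then $c=0$ is impossible, since $a\not\equiv0$ and $P$ is prime. The condition therefore says $a\equiv cf^{-1}$ in $E=\Fq[T]/(P)$ with $c\in\Fq^\times$.
 \item Projectively, the nonzero residues $a$ fall into $(q^{k+1}-1)/(q-1)=Q_k(q)$ classes under $\Fq^\times$. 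Each speed $f$ with $P\nmid f$ is bad for exactly one class, namely the class of $f^{-1}$.
 \item If instead $P\mid f$, every $a$ is bad for $f$. So we must choose $P$ dividing no member of $\cF$.
\end{itemize}

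\textbf{Step 2: choosing $P$.} Each nonzero $f$ of degree at most $D$ has at most $\lfloor D/(k+1)\rfloor$ distinct monic irreducible factors of degree $k+1$. Summing over $\cF$ and using $\abs{\cF}\le Q_k(q)-1=q^k+\cdots+q$, the number of such $P$ dividing some member of $\cF$ is at most
\[
 (q^k+\cdots+q)\left\lfloor\frac D{k+1}\right\rfloor<I_{k+1}(q),
\]
by hypothesis \eqref{eq:CR-small-degree-condition}. So some monic irreducible $P$ of degree $k+1$ divides no $f\in\cF$.

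\textbf{Step 3: conclusion.} For this $P$, the speeds in $\cF$ exclude at most $\abs{\cF}<Q_k(q)$ of the $Q_k(q)$ projective classes. Choose $a$ in a remaining class and put $\alpha=a/P\in\Torus$; this lies in $\Torus$ because $\deg a<\deg P$. Then $\norm{\alpha f}\ge q^{-k}$ for every $f\in\cF$, so $\delta(\cF)\ge q^{-k}$.

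\textbf{Expected obstacle.} There is no real obstacle. The one point needing care is the exact threshold in Step 1: the fractional part must be nonzero exactly in its top $k$ coefficients, which is why $P$ must have degree exactly $k+1$. With that degree the count in Step 2 matches the hypothesis precisely.
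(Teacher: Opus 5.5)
Your proof is correct and complete. The ingredients all check out: the rational times $a/P$ with $P$ monic irreducible of degree $k+1$; the criterion that $\norm{(a/P)f}<q^{-k}$ exactly when $af$ is congruent to a constant modulo $P$; the choice of a $P$ dividing no speed, which is exactly where the hypothesis involving $I_{k+1}(q)$ and $\lfloor D/(k+1)\rfloor$ is used; and the count of the $Q_k(q)$ classes of $(\Fq[T]/(P))^\times$ modulo $\Fq^\times$.

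The paper gives no proof of this statement and simply quotes it from Chow and Rimani\'c. Your argument is the mechanism the hypothesis is built around, so it provides a self-contained proof. It does not even need $k>1$; for $k=1$ the conclusion is already the union bound.
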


The exact condition immediately gives a convenient uniform corollary.

\begin{lemma}\label{lem:small-degree-cor}
Fix $k\ge2$.  For all sufficiently large $q$, if
\begin{equation}\label{eq:q-fourth-degree}
 \max_{f\in\cF}\deg f\le q/4
 \qquad\text{and}\qquad
 \abs{\cF}<Q_k(q),
\end{equation}
then $\delta(\cF)\ge q^{-k}$.
\end{lemma}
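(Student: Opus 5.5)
We deduce the lemma directly from \cref{thm:CR-small-degree}: it suffices to check that condition \eqref{eq:CR-small-degree-condition} holds whenever $D:=\max_{f\in\cF}\deg f\le q/4$ and $q$ is large in terms of $k$. The plan is to bound the left-hand side of \eqref{eq:CR-small-degree-condition} from below, bound the right-hand side from above, and compare.

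For the left-hand side we use the standard count of monic irreducibles,
\[
 (k+1)I_{k+1}(q)=\sum_{e\mid k+1}\mu(e)q^{(k+1)/e}\ge q^{k+1}-\sum_{j\le (k+1)/2}q^j\ge q^{k+1}-2q^{(k+1)/2}.
\]
Hence
\[
 I_{k+1}(q)\ge\frac{q^{k+1}}{k+1}\bigl(1-2q^{-(k+1)/2}\bigr).
\]
The denominator satisfies
\[
 q^k+\cdots+q=q^k\cdot\frac{1-q^{-k}}{1-q^{-1}}\le q^k\bigl(1+2q^{-1}\bigr)\qquad(q\ge2).
\]
The left-hand side of \eqref{eq:CR-small-degree-condition} is therefore at least
\[
 \frac{q}{k+1}\bigl(1-O(q^{-1})\bigr).
\]

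For the right-hand side, $D\le q/4$ gives
\[
 \Bigl\lfloor\frac D{k+1}\Bigr\rfloor\le\frac{q}{4(k+1)}.
\]
Since
\[
 \frac{q}{k+1}\bigl(1-O(q^{-1})\bigr)>\frac{q}{4(k+1)}
\]
once $q$ exceeds an absolute threshold (in fact $k\ge2$ makes the error term tiny), condition \eqref{eq:CR-small-degree-condition} holds. \cref{thm:CR-small-degree} then gives $\delta(\cF)\ge q^{-k}$ for every family with $\abs\cF<Q_k(q)$.

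There is no real obstacle. The only care needed is the routine lower bound for $I_{k+1}(q)$ via Möbius inversion, and the slack between the factors $1$ and $1/4$ absorbs all error terms. Nonzero polynomials of degree at most $D$ are exactly the hypothesis of \cref{thm:CR-small-degree}, so no further normalisation is needed.
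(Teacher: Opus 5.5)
Your proposal is correct and follows essentially the same route as the paper: you bound the left side of \eqref{eq:CR-small-degree-condition} below by $\frac{q}{k+1}(1-O(q^{-1}))$, note that $\lfloor D/(k+1)\rfloor\le q/(4(k+1))$, and invoke \cref{thm:CR-small-degree}. Your explicit M\"obius lower bound for $I_{k+1}(q)$ is a slightly more quantitative form of the paper's $O_k(q^{(k+1)/2})$ error term, and it has the small bonus that the threshold on $q$ does not depend on $k$.
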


\begin{proof}
The standard formula for irreducible polynomials gives
\[
 I_{k+1}(q)
 =\frac{q^{k+1}}{k+1}
 +O_k(q^{(k+1)/2}).
\]
Consequently,
\[
 \frac{I_{k+1}(q)}{q^k+\cdots+q}
 =\frac q{k+1}+O_k(1),
\]
and in particular the ratio is greater than
$q/[2(k+1)]$ for sufficiently large $q$.  Under
\eqref{eq:q-fourth-degree},
\[
 \left\lfloor\frac D{k+1}\right\rfloor
 \le\frac q{4(k+1)}.
\]
Thus \eqref{eq:CR-small-degree-condition} holds, and
\cref{thm:CR-small-degree} applies.
\end{proof}

\begin{corollary}\label{cor:rank-lower}
Fix $k\ge2$.  For all sufficiently large $q$, if
\[
 \abs{\cF}<Q_k(q)
 \qquad\text{and}\qquad
 \delta(\cF)<q^{-k},
\]
then
\begin{equation}\label{eq:rank-lower}
 d(\cF)\ge\frac{q}{4(k-1)}.
\end{equation}
\end{corollary}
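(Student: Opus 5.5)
We argue by contradiction, combining \cref{prop:compression} with \cref{lem:small-degree-cor}. Take $q$ large enough that \cref{lem:small-degree-cor} applies, and let $\cF$ satisfy $\abs{\cF}<Q_k(q)$ and $\delta(\cF)<q^{-k}$. Suppose that $d:=d(\cF)<q/(4(k-1))$.

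First we normalise $\cF$ so that \cref{prop:compression} can be applied. Multiplying a speed by an element of $\Fq^\times$ changes neither $\delta$ nor the span, so every speed may be taken monic. If two speeds become equal after this normalisation, we keep only one copy. This does not change the union $\bigcup_f K_f$, so by \cref{lem:covering} the covering property is preserved. It also does not change the span, and it can only decrease the cardinality. Hence we may assume $\cF$ is a family of distinct monic polynomials, still with $\abs{\cF}<Q_k(q)$, $\delta(\cF)<q^{-k}$ and rank $d$.

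Now apply \cref{prop:compression}. It produces a family $\cF'$ with $\abs{\cF'}=\abs{\cF}<Q_k(q)$, with $\delta(\cF')<q^{-k}$, and with
\[
 \max_{f'\in\cF'}\deg f'<(k-1)d<\frac q4 .
\]
Thus $\cF'$ satisfies both hypotheses in \eqref{eq:q-fourth-degree}. \Cref{lem:small-degree-cor} then gives $\delta(\cF')\ge q^{-k}$, which contradicts $\delta(\cF')<q^{-k}$. Therefore $d(\cF)\ge q/(4(k-1))$, which is \eqref{eq:rank-lower}.

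The only delicate step is the reduction to distinct monic speeds, which \cref{prop:compression} requires. It is handled by the observation that discarding duplicate kernels leaves the union $\bigcup_f K_f$, and hence the covering property, unchanged.
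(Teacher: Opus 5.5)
Your proof is correct and follows the paper's argument: compress with \cref{prop:compression} to a family of maximum degree less than $(k-1)d<q/4$, then contradict \cref{lem:small-degree-cor}. Your explicit reduction to distinct monic speeds, which \cref{prop:compression} assumes, is left implicit in the paper's proof of this corollary and is carried out only in the proof of \cref{thm:fixed-k-intro}, so including it here is appropriate.
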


\begin{proof}
Apply \cref{prop:compression}.  If
$d<q/[4(k-1)]$, the compressed family has maximum degree less than
$(k-1)d<q/4$, contradicting \cref{lem:small-degree-cor}.
\end{proof}

\begin{proof}[Proof of \cref{thm:fixed-k-intro}]
Let $\cF$ be a covering family.  Replacing every speed by its monic representative and deleting repetitions preserves the cover and can only decrease its cardinality, so it suffices to treat a family of distinct monic polynomials.  Put
$N=\abs{\cF}$ and $S=N-q^k$.  If $N\ge Q_k(q)$, then
$S\ge q^{k-1}$, which is stronger than the required estimate for sufficiently large $q$.

Suppose that $N<Q_k(q)$.  By \cref{cor:rank-lower},
$d(\cF)\gg_kq$.  Moreover,
$N<Q_k(q)\ll_kq^k$ and $S\ge1$, so
\[
 L=\log\left(\frac{eNq^k}{S}\right)
 \ll_k\log q.
\]
Consequently,
\[
 \frac{\log(2q)}{L}\gg_k1.
\]
Applying \cref{thm:rank-intro} gives
\[
 S\gg_k d(\cF)^{2/3}\gg_kq^{2/3},
\]
which proves \eqref{eq:fixed-k-intro}.
\end{proof}

\section{An integer-multiplicity refinement for \texorpdfstring{$k=2$}{k=2}}\label{sec:k2}

Throughout this section $k=2$.  If the maximum speed degree is $D$, then each kernel $K_f$ has dimension $D$ in the ambient space $V_{D+2}$.  We retain the ordering and sunflower decomposition of Chow and Rimani\'c, but sharpen the estimate for the union of pairwise intersections inside a new kernel.

\subsection{An integer-valued support inequality}

The function $\vartheta$ from \eqref{eq:vartheta-def} is continuous and decreasing.  Formula \eqref{eq:vartheta-piecewise} follows by comparing two consecutive affine functions in the maximum: the functions indexed by $h$ and $h+1$ cross at $u=h$.  In particular, $\vartheta$ is the piecewise-linear interpolation of $u\mapsto1/(1+u)$ at the nonnegative integers.  Since $u\mapsto1/(1+u)$ is convex,
\begin{equation}\label{eq:vartheta-CS-comparison}
 \vartheta(u)\ge\frac1{1+u},
\end{equation}
with strict inequality unless $u$ is an integer.

\begin{lemma}\label{lem:integer-support}
Let $X$ be a finite set and let $Z:X\to\mathbb Z_{\ge0}$.  Put
\[
 M_1:=\sum_{x\in X}Z(x),
 \qquad
 M_2:=\sum_{x\in X}Z(x)(Z(x)-1).
\]
If $M_1>0$, then
\begin{equation}\label{eq:integer-support}
 \abs{\set{x\in X:Z(x)>0}}
 \ge M_1\vartheta\left(\frac{M_2}{M_1}\right).
\end{equation}
\end{lemma}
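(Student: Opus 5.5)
The plan is to sum the pointwise inequality \eqref{eq:integer-multiplicity-intro} over $X$ and then optimise in $h$. First I will prove that inequality for integers $z\ge0$ and $h\in\mathbb N$:
\[
 \one_{\{z>0\}}\ge\frac{2z}{h+1}-\frac{z(z-1)}{h(h+1)}.
\]
Multiplying by $h(h+1)>0$, the right-hand side becomes $z(2h-z+1)$. So I need
\[
 z(2h+1-z)\le h(h+1)\quad\text{for } z\ge1,
 \qquad
 0\le0\quad\text{for } z=0.
\]
The quadratic $z\mapsto z(2h+1-z)$ is symmetric about $z=h+\tfrac12$. On the integers its maximum is therefore attained at $z=h$ and $z=h+1$, where it equals $h(h+1)$. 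This gives the pointwise inequality, with equality exactly when $z\in\{0,h,h+1\}$. This integrality step is the only place where the hypothesis $Z\in\mathbb Z_{\ge0}$ is used; for real $z$ the bound fails near $z=h+\tfrac12$.

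Next, I apply the inequality at each $x\in X$ with $z=Z(x)$ and sum. This gives, for every $h\in\mathbb N$,
\[
 \abs{\set{x:Z(x)>0}}
 \ge\frac{2M_1}{h+1}-\frac{M_2}{h(h+1)}
 =M_1\,\frac{2h-M_2/M_1}{h(h+1)},
\]
where I have used $M_1>0$ to factor it out. Taking the maximum over $h\in\mathbb N$ and invoking the definition \eqref{eq:vartheta-def} with $u=M_2/M_1\ge0$ yields \eqref{eq:integer-support}. The maximum is attained: for $h>u$ the expression is positive and tends to $0$, so the supremum over $h$ is a maximum. Equivalently, one can simply choose $h=\lceil u\rceil$ when $u>0$ and $h=1$ when $u=0$, and read off \eqref{eq:vartheta-piecewise}.

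The argument has no real obstacle. The only point requiring care is the integer maximisation of the quadratic, that is, checking that the two nearest integers to $h+\tfrac12$ give exactly $h(h+1)$.
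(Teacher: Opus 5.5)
Your proposal is correct and follows the paper's proof: the same pointwise inequality (your integer maximisation of $z(2h+1-z)$ at $z\in\{h,h+1\}$ is equivalent to the paper's factorisation of the difference as $(z-h)(z-h-1)/(h(h+1))\ge0$), summed over $X$ and then maximised over $h$. Your extra remark that the maximum over $h$ is attained is a harmless addition that the paper leaves implicit in the definition of $\vartheta$.
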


\begin{proof}
For every $h\in\mathbb N$ and every integer $z\ge0$,
\begin{equation}\label{eq:integer-pointwise}
 \one_{\{z>0\}}
 \ge
 \frac{2z}{h+1}
 -\frac{z(z-1)}{h(h+1)}.
\end{equation}
For $z=0$ both sides vanish.  For $z\ge1$, the difference between the two sides is
\[
 \frac{(z-h)(z-h-1)}{h(h+1)}\ge0,
\]
because $z$ is an integer.  Summing \eqref{eq:integer-pointwise} over $X$ gives
\[
 \abs{\set{Z>0}}
 \ge
 \frac{2M_1}{h+1}
 -\frac{M_2}{h(h+1)}.
\]
Maximising over $h$ gives \eqref{eq:integer-support}.
\end{proof}

The next two consequences are the forms used in the covering argument.

\begin{lemma}\label{lem:integer-basic}
Let $V_1,\ldots,V_t$ be distinct subspaces of a $D$-dimensional vector space over $\Fq$.  Suppose that
\[
 \abs{V_i}=q^{D-2},
 \qquad
 \abs{V_i\cap V_j}\le q^{D-3}
 \quad(i\ne j).
\]
Then
\begin{equation}\label{eq:integer-basic}
 \abs{\bigcup_{i=1}^tV_i}
 \ge
 tq^{D-2}\vartheta\left(\frac{t-1}{q}\right).
\end{equation}
\end{lemma}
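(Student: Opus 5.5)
We apply \cref{lem:integer-support} with the ambient space as $X$ and $Z(x):=\sum_{i=1}^t\one_{V_i}(x)$, the multiplicity of the family $V_1,\ldots,V_t$. The support of $Z$ is exactly $\bigcup_iV_i$, so it suffices to compute $M_1$, bound $M_2$, and use the monotonicity of $\vartheta$. For the first moment we have directly
\[
 M_1=\sum_i\abs{V_i}=tq^{D-2}>0 .
\]
For the second factorial moment, $Z(x)(Z(x)-1)$ counts ordered pairs $(i,j)$ with $i\ne j$ and $x\in V_i\cap V_j$. Summing over $x$ and using the hypothesis $\abs{V_i\cap V_j}\le q^{D-3}$ gives
\[
 M_2=\sum_{i\ne j}\abs{V_i\cap V_j}\le t(t-1)q^{D-3}.
\]
Hence $M_2/M_1\le (t-1)/q$.

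We now invoke that $\vartheta$ is decreasing, as noted just before \cref{lem:integer-support}. This can also be checked directly: each affine function $u\mapsto(2h-u)/(h(h+1))$ is decreasing in $u$, and a maximum of decreasing functions is decreasing. It follows that $\vartheta(M_2/M_1)\ge\vartheta((t-1)/q)$. Substituting into \eqref{eq:integer-support} yields
\[
 \abs{\textstyle\bigcup_iV_i}\ge M_1\vartheta(M_2/M_1)\ge tq^{D-2}\vartheta\bigl((t-1)/q\bigr),
\]
which is \eqref{eq:integer-basic}.

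There is no real obstacle here. The hypothesis of distinctness, and the fact that the $V_i$ are subspaces rather than arbitrary sets, are not even needed beyond the stated cardinality bounds; distinctness merely makes the intersection hypothesis consistent. The only point requiring care is applying monotonicity in the correct direction. An upper bound on the ratio $M_2/M_1$ produces a lower bound on $\vartheta$, and the inequality in \cref{lem:integer-support} has nonnegative prefactor $M_1$, so the direction is preserved.
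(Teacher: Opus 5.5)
Your proof is correct and is essentially the paper's argument: you use the same multiplicity $Z=\sum_i\one_{V_i}$, the exact first moment $M_1=tq^{D-2}$, the bound $M_2\le t(t-1)q^{D-3}$, and then \cref{lem:integer-support}. The only difference is cosmetic: the paper uses, for each fixed $h$, the monotonicity of $2M_1/(h+1)-M_2/(h(h+1))$ in $M_1$ and $M_2$ before maximising, whereas you use the monotonicity of $\vartheta$ after maximising, which is equivalent here because $M_1$ is known exactly.
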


\begin{proof}
Set $Z:=\sum_{i=1}^t\one_{V_i}$.  Then
\[
 M_1=tq^{D-2},
 \qquad
 M_2
 =\sum_{i\ne j}\abs{V_i\cap V_j}
 \le t(t-1)q^{D-3}.
\]
For each fixed $h$, the right-hand side obtained by summing
\eqref{eq:integer-pointwise} is increasing in $M_1$ and decreasing in $M_2$.  Substituting the displayed bounds and maximising over $h$ proves \eqref{eq:integer-basic}.
\end{proof}

Recall that a family of subspaces is a sunflower with core $L$ if every two distinct members meet in the same subspace $L$.

\begin{lemma}\label{lem:integer-sunflower}
Under the hypotheses of \cref{lem:integer-basic}, suppose in addition that
$V_1,\ldots,V_r$ form a sunflower with common core $L$ and
$\abs L\le q^{D-3}$.  If $r\le q$ and $t\ge r$, then
\begin{align}
 \abs{\bigcup_{i=1}^tV_i}
 \ge{}&rq^{D-2}-(r-1)q^{D-3}\notag\\
 &+(t-r)(q-r)q^{D-3}
 \vartheta\left(\frac{t-r-1}{q-r}\right).
 \label{eq:integer-sunflower}
\end{align}
The last term is interpreted as zero when $t=r$ or $r=q$.
\end{lemma}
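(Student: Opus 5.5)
The plan is to split the union into the sunflower part and the rest, and to apply \cref{lem:integer-support} to the multiplicity of the remaining subspaces restricted to the complement of the sunflower. First, the sunflower union is computed exactly: the sets $V_i\setminus L$ for $i\le r$ are pairwise disjoint, so
\[
 \abs{\bigcup_{i\le r}V_i}=r(q^{D-2}-\abs L)+\abs L\ge rq^{D-2}-(r-1)q^{D-3},
\]
using $\abs L\le q^{D-3}$ and $r\ge1$. This gives the first two terms of \eqref{eq:integer-sunflower}.

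Second, for each $j>r$ we bound from below how much of $V_j$ lies outside $U_0:=\bigcup_{i\le r}V_i$. Since $\abs{V_j\cap V_i}\le q^{D-3}$ for each $i\le r$, and all these intersections contain $V_j\cap L$ (which contains the origin), we get $\abs{V_j\cap U_0}\le rq^{D-3}$, so $\abs{V_j\setminus U_0}\ge q^{D-2}-rq^{D-3}=(q-r)q^{D-3}$. Now set $X:=V\setminus U_0$ and $Z:=\sum_{j=r+1}^t\one_{V_j}$ on $X$. Then
\[
 M_1\ge(t-r)(q-r)q^{D-3},\qquad M_2\le\sum_{r<j\ne j'\le t}\abs{V_j\cap V_{j'}}\le(t-r)(t-r-1)q^{D-3}.
\]

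Third, as in the proof of \cref{lem:integer-basic}, for each fixed $h$ the lower bound $2M_1/(h+1)-M_2/(h(h+1))$ is increasing in $M_1$ and decreasing in $M_2$. So we may substitute these bounds and then maximise over $h$. This gives
\[
 \abs{\set{Z>0}}\ge(t-r)(q-r)q^{D-3}\,\vartheta\!\left(\frac{t-r-1}{q-r}\right),
\]
and adding the disjoint piece $U_0$ proves \eqref{eq:integer-sunflower}. The degenerate cases $t=r$ or $r=q$ are immediate, because then the last term is defined to be zero and the first step alone suffices.

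The only delicate point is the monotonicity substitution. This step is valid only when $M_1$ is replaced by a smaller value and $M_2$ by a larger one at a fixed $h$, never after maximising over $h$. The other point to watch is the second step: the argument needs that only the pairwise intersections with $V_i$, $i\le r$, are subtracted, and the crude union bound there suffices.
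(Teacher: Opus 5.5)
Your proposal is correct and follows the paper's proof essentially step for step: you count the sunflower union $\bigcup_{i\le r}V_i$ exactly, then use the union bound to get $\abs{V_j\setminus\bigcup_{i\le r}V_i}\ge(q-r)q^{D-3}$ for $j>r$. You then bound the first and second factorial moments of the remaining multiplicity off the sunflower, and substitute these into \eqref{eq:integer-pointwise} at fixed $h$ before maximising, which is exactly what the paper does with its sets $B_j$.
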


\begin{proof}
Let $A:=\bigcup_{i=1}^rV_i$.  Since the first $r$ subspaces form a sunflower,
\[
 \abs A
 =rq^{D-2}-(r-1)\abs L
 \ge rq^{D-2}-(r-1)q^{D-3}.
\]
Write $s=t-r$ and
$B_j:=V_{r+j}\setminus A$ for $1\le j\le s$.  The union bound and the pairwise-intersection hypothesis give
\[
 \abs{B_j}\ge(q-r)q^{D-3},
 \qquad
 \abs{B_i\cap B_j}\le q^{D-3}\quad(i\ne j).
\]
For $W:=\sum_{j=1}^s\one_{B_j}$, we therefore have
\[
 \sum_xW(x)\ge s(q-r)q^{D-3},
 \qquad
 \sum_xW(x)(W(x)-1)\le s(s-1)q^{D-3}.
\]
Apply the pointwise inequality \eqref{eq:integer-pointwise} with an arbitrary $h$, substitute these two estimates, and then maximise over $h$.  This gives
\[
 \abs{\bigcup_{j=1}^sB_j}
 \ge
 s(q-r)q^{D-3}
 \vartheta\left(\frac{s-1}{q-r}\right).
\]
The sets $B_j$ are disjoint from $A$, so adding the two contributions proves the lemma.
\end{proof}

\subsection{The Chow--Rimani\'c ordering}

We record the precise consequences of the ordering argument in
\cite[Section~4]{ChowRimanic} that are needed below.  The \emph{covering contribution} of the $m$th kernel in an ordering is
\[
 \abs{K_m\setminus\bigcup_{i<m}K_i}.
\]

\begin{lemma}\label{lem:CR-ordering}
Suppose that the kernels associated with a family $\cF$ cover $V_{D+2}$.  Let $r\le q$ be the maximum size of a codimension-four sunflower and order the kernels as in the proof of
\cite[Proposition~4.8]{ChowRimanic}.  Then the following statements hold.
\begin{enumerate}[label=\textup{(\roman*)}]
\item The total covering contribution of the first
$1+r(r-1)$ kernels is at most
\begin{align}
 S_{\mathrm{init}}:={}&q^D+(r-1)(q^D-q^{D-2})\notag\\
 &+(r-1)^2
 \bigl(q^D-rq^{D-2}+(r-1)q^{D-3}\bigr).
 \label{eq:S-init}
\end{align}
\item Before the change point, if $K_m$ is the new kernel and
$V_i:=K_i\cap K_m$, then at least $t$ of the $V_i$ are distinct whenever
\begin{equation}\label{eq:CR-pigeonhole}
 m-1>(t-1)(r-1).
\end{equation}
Each such intersection has size $q^{D-2}$, and two distinct intersections meet in at most $q^{D-3}$ points.
\item The first $r$ distinct intersections in \textup{(ii)} form a sunflower inside $K_m$, with core of size at most $q^{D-3}$.
\item After the change point, every new kernel meets the union of its predecessors in at least $q^{D-1}$ points.
\end{enumerate}
\end{lemma}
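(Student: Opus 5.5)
This lemma is a bookkeeping restatement of the ordering argument in \cite[Section~4]{ChowRimanic}. I would prove it by recalling that construction and checking each clause against the dimension counts of \cref{sec:covering}. Throughout $k=2$. Each $K_f$ has dimension $D$ in $V_{D+2}$. For distinct monic $f,g$, the pairwise intersection $K_f\cap K_g=(U_f+U_g)^\perp$ has codimension $4$, so it has size $q^{D-2}$. The exception is a genuine syzygy $Af=Bg$ with $\deg A,\deg B\le1$, which would force $U_f+U_g$ to have dimension $3$. Following Chow--Rimani\'c, I would isolate this coincidence case and handle it by their argument. In that case the intersection is larger, but it is only counted more favourably.

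\textbf{Step 1: the ordering.} First fix a maximal codimension-four sunflower $K_1',\dots,K_r'$ with core $L$. Distinct codim-$4$ subspaces of a $D$-dimensional space meet in codimension at least $5$ within it, so $|L|\le q^{D-3}$. Pairwise intersections are $(D-2)$-dimensional, and distinct ones meet in at most $q^{D-3}$ points. A sunflower inside a fixed $K_m$ has petals of size $q^{D-2}$ and $r$ petals together with their residues. Counting $|K_m|$ then gives $r\le q$.

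Order the kernels greedily as in \cite[Proposition~4.8]{ChowRimanic}. Start with a kernel $K_1$ and the $r-1$ others realising a maximal sunflower in $K_1$. Then add, in rounds, kernels whose intersections with the current ones are controlled. The change point is the first index after which every new kernel already meets the union of its predecessors in at least a codimension-one piece of itself.

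\textbf{Step 2: clauses (i) and (iv).} For (i), I would bound the covering contributions term by term. $K_1$ contributes at most $q^D$. Each of the next $r-1$ kernels meets $K_1$ in a set of size $q^{D-2}$, so each contributes at most $q^D-q^{D-2}$. Each of the remaining $(r-1)^2$ kernels in the initial block contains a full $r$-petal sunflower of previously covered intersections. By the sunflower count, its covered part has size at least $rq^{D-2}-(r-1)q^{D-3}$. Summing these three contributions gives exactly $S_{\mathrm{init}}$. Clause (iv) holds by the definition of the change point in the Chow--Rimani\'c ordering.

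\textbf{Step 3: clauses (ii) and (iii).} For (ii), the key fact is that no value of $K_i\cap K_m$ can repeat more than $r-1$ times among $i<m$. Otherwise the $r$ kernels sharing it, together with $K_m$, would contain a sunflower whose core is that common intersection, with more than $r$ members, contradicting maximality. Pigeonhole then gives at least $t$ distinct intersections whenever $m-1>(t-1)(r-1)$. For (iii), the ordering places the initial sunflower block first. The first $r$ distinct intersections inside $K_m$ then come from kernels that lie in a common sunflower, and they meet pairwise in $K_m\cap L'$. The core bound is the codimension count from Step 1.

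\textbf{Main obstacle.} The hard part is making (iii) precise: showing that the first $r$ distinct intersections genuinely form a sunflower inside $K_m$, rather than merely having small pairwise overlaps. I would try to extract this directly from the case analysis in the proof of \cite[Proposition~4.8]{ChowRimanic} and not reprove it independently.
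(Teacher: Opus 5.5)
Your overall route is the paper's. The paper proves this lemma purely by citation: (i) is equation~(4.9) in the proof of \cite[Proposition~4.8]{ChowRimanic}, (ii) and (iii) are the two conclusions of Claim~1 there, and (iv) is the definition of the change point. However, several facts you supply while reconstructing the ordering are false, and you lean on one of them for (iii).

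\textbf{Errors in the reconstruction.} First, $r\le q$ does not follow from counting in $K_m$. Petals of size $q^{D-2}$ with core of size at most $q^{D-3}$ only force $rq^{D-2}-(r-1)q^{D-3}\le q^D$, i.e.\ $r\le q^2+q+1$. In the lemma, $r\le q$ is a hypothesis. The proof of \cref{thm:c2-intro} disposes of $r=q+1$ and $r\ge q+2$ separately via \cite[Proposition~4.6]{ChowRimanic} and \cite[Proposition~A.1]{ChowRimanic}. The hypothesis $r\le q$ is also what keeps pre-change bounds such as $rq^{D-2}-(r-1)q^{D-3}$ below $q^{D-1}$, so that they survive past the change point.

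Second, the maximal codimension-four sunflower is a family of \emph{kernels}; that is the maximality your own pigeonhole for (ii) invokes. Its core therefore has $\abs L=q^{D-2}$, not $\abs L\le q^{D-3}$; your Step~2 in fact uses that each of $K_2,\dots,K_r$ loses exactly $q^{D-2}$. The bound $q^{D-3}$ in (iii) is for $K_m\cap L$, and it comes from maximality, not from a codimension count.

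Third, the degenerate case $\dim(U_f+U_g)=3$ cannot simply be ``counted more favourably''. Clause (ii) asserts that it does not occur before the change point, since such a kernel already meets a single predecessor in $q^{D-1}$ points, which is the post-change regime of (iv). The maximality arguments also need every relevant pairwise intersection to be $(D-2)$-dimensional.

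\textbf{Clause (iii).} With these corrections, your ``main obstacle'' has a short proof. Let the order begin with a maximal sunflower $K_1,\dots,K_r$ with core $L$, and let $m>r$ precede the change point.
\begin{itemize}
\item Suppose $L\subseteq K_m$. Then each $K_m\cap K_i$ ($i\le r$) contains $L$ and has the same dimension $D-2$, so it equals $L$. Hence $K_1,\dots,K_r,K_m$ is a sunflower of size $r+1$, a contradiction. So $K_m\cap L\subsetneq L$ and $\abs{K_m\cap L}\le q^{D-3}$.
\item Suppose $V_i=V_j$ for some $i\ne j\le r$. Then $V_i\subseteq K_i\cap K_j=L$, so $V_i=L\subseteq K_m$, which the first item excludes. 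Hence $V_1,\dots,V_r$ are distinct, and they are the first $r$ distinct intersections.
\item For all $i\ne j\le r$ we have $V_i\cap V_j=K_m\cap(K_i\cap K_j)=K_m\cap L$.
\end{itemize}
So the sunflower inside $K_m$ is simply inherited from the kernel sunflower, and nothing needs to be extracted from the case analysis. The same computation gives the per-kernel overlap $rq^{D-2}-(r-1)q^{D-3}$ behind the $(r-1)^2$ term of (i). That term is a justification your Step~2 asserted without proof.
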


\begin{proof}
Statement \textup{(i)} is equation~(4.9) in the proof of
\cite[Proposition~4.8]{ChowRimanic}.  Statements \textup{(ii)} and \textup{(iii)} are the two conclusions of Claim~1 in that proof, and \textup{(iv)} is the definition of the change point.  The same proof also shows that any lower bound from the pre-change analysis remains valid after the change point whenever it is at most $q^{D-1}$.
\end{proof}

\subsection{The asymptotic covering loss}

Consider a sequence of covering families for which
\begin{equation}\label{eq:R-r-asymptotic}
 R:=\abs{\cF}=q^2+cq+o(q),
 \qquad
 r=\lambda q+o(q),
 \qquad
 0\le\lambda\le1.
\end{equation}
The nominal capacity of one kernel is $q^D$.  Comparing
\eqref{eq:S-init} with the nominal contribution
$[1+r(r-1)]q^D$, the initial phase loses
\begin{equation}\label{eq:initial-saving}
 \lambda^3q^{D+1}+o(q^{D+1}).
\end{equation}

For a later kernel, suppose that the ordering guarantees
$t=xq+o(q)$ distinct intersections with its predecessors.  By
\cref{lem:integer-basic}, their union inside the new kernel has size at least
\begin{equation}\label{eq:atilde-asymptotic}
 q^{D-1}\bigl(\widetilde a(x)+o(1)\bigr).
\end{equation}
By \cref{lem:integer-sunflower}, it is also at least
\begin{equation}\label{eq:btilde-asymptotic}
 q^{D-1}\bigl(\widetilde b_\lambda(x)+o(1)\bigr).
\end{equation}
Indeed,
\[
 \frac{t-r-1}{q-r}
 =\frac{x-\lambda}{1-\lambda}+o(1).
\]
For $u\in[h-1,h]$, formula \eqref{eq:vartheta-piecewise} gives
$u\vartheta(u)\le h/(h+1)<1$.  Consequently,
\[
 0\le\widetilde a(x)\le1,
 \qquad
 0\le\widetilde b_\lambda(x)\le
 \lambda+(1-\lambda)^2\le1.
\]
Thus the same lower bounds remain valid after the change point by
\cref{lem:CR-ordering}(iv).

Condition \eqref{eq:CR-pigeonhole} shows that increasing the guaranteed value of $t$ by one uses at most $r-1=\lambda q+o(q)$ further kernels.  Starting at the end of the initial phase and continuing to
$R=q^2+O(q)$, the scaled variable $x=t/q$ runs from
$\lambda$ to $1/\lambda$, up to $o(1)$ endpoint errors.

\begin{proposition}\label{prop:Ctilde-lambda}
Suppose that $0<\lambda<1$ in \eqref{eq:R-r-asymptotic}.  Then
\begin{equation}\label{eq:c-lower-Ctilde-lambda}
 c\ge\widetilde{\cC}(\lambda).
\end{equation}
The same conclusion holds at $\lambda=0$ and $\lambda=1$ after the continuous extension
$\widetilde{\cC}(0)=\widetilde{\cC}(1)=1$.
\end{proposition}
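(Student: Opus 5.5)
We compare the total covering contributions of the kernels in the Chow--Rimani\'c ordering of \cref{lem:CR-ordering} with the size of the ambient space. Since the kernels cover $V_{D+2}$, the contributions sum to exactly $q^{D+2}$. The nominal capacity is $Rq^D=q^{D+2}+cq^{D+1}+o(q^{D+1})$. Hence the total \emph{loss}, defined as nominal capacity minus actual contributions, equals $cq^{D+1}+o(q^{D+1})$. It therefore suffices to show that the total loss is at least $\widetilde{\cC}(\lambda)q^{D+1}-o(q^{D+1})$.

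\textbf{Initial phase.} The first $1+r(r-1)$ kernels lose at least $[1+r(r-1)]q^D-S_{\mathrm{init}}$. Expanding \eqref{eq:S-init} with $r=\lambda q+o(q)$ gives the leading term $(r-1)^2(rq^{D-2})\approx\lambda^3q^{D+1}$. The remaining terms are $O(rq^{D-2})=O(q^{D-1})$, together with $(r-1)^2(r-1)q^{D-3}=O(q^D)$, and all of these are $o(q^{D+1})$. This is \eqref{eq:initial-saving}.

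\textbf{Later kernels.} For a kernel $K_m$ in the later phase, the loss equals $\abs{K_m\cap\bigcup_{i<m}K_i}\ge\abs{\bigcup_i V_i}$, with $V_i=K_i\cap K_m$. By \cref{lem:CR-ordering}(ii), whenever $m-1>(t-1)(r-1)$ there are at least $t$ distinct $V_i$. These satisfy the hypotheses of \cref{lem:integer-basic}, and by (iii) the first $r$ of them form a sunflower satisfying the hypotheses of \cref{lem:integer-sunflower}. So the loss of $K_m$ is at least $q^{D-1}\max\{\widetilde a(x),\widetilde b_\lambda(x)\}+o(q^{D-1})$, where $x=t/q$, by \eqref{eq:atilde-asymptotic}--\eqref{eq:btilde-asymptotic}. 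These bounds are at most $q^{D-1}$ up to $o(1)$, so by (iv) they persist past the change point.

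Since both $\widetilde a$ and $\widetilde b_\lambda$ are nondecreasing in $t$ (the bound is monotone in the number of intersections one uses), we take the largest admissible $t$. This is $t=\lceil (m-1)/(r-1)\rceil$ roughly, i.e.\ $x\approx m/(\lambda q^2)$. As $m$ runs from $1+r(r-1)\approx\lambda^2q^2$ to $R\approx q^2$, $x$ runs from $\lambda$ to $1/\lambda$. Each unit increment of $t$ (i.e.\ $\Delta x=1/q$) corresponds to about $\lambda q$ kernels. Summing the loss over these kernels is then a Riemann sum:
\[
 \sum_m q^{D-1}\max\{\widetilde a,\widetilde b_\lambda\}(x_m)
 =\lambda q\cdot q\cdot q^{D-1}\int_\lambda^{1/\lambda}\max\{\widetilde a(x),\widetilde b_\lambda(x)\}\,dx+o(q^{D+1}).
\]
The Riemann sum converges because the integrand is continuous and bounded on a compact interval, and the $o(1)$ errors are uniform for $\lambda$ fixed in $(0,1)$. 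Adding the initial loss gives $c\ge\lambda^3+\lambda\int_\lambda^{1/\lambda}\max\{\cdots\}=\widetilde{\cC}(\lambda)$.

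\textbf{Endpoint cases and main obstacle.} At $\lambda=0$ or $\lambda=1$, both the pieces and the sunflower lemma degenerate. We handle these by a limiting argument. If $r=o(q)$, the bound via $\widetilde a$ alone still applies with $t\approx m/r\to$ large, and I would show the loss tends to $q^{D+1}$, giving $c\ge1$. If $\lambda=1$, the initial phase alone gives $c\ge1$. Alternatively, for $\lambda=0$ one can compare with $\widetilde{\cC}(\lambda')$ for small $\lambda'\ge\lambda$, using monotonicity of the bounds in $r$. The main obstacle is justifying the uniformity of the $o(1)$ errors across the whole range of $x$. This includes $x$ near $\lambda$, where $(t-r-1)/(q-r)$ may be negative and one uses $\widetilde a$ only, and near $1/\lambda$, where $t$ may exceed $q$. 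Continuity of $\vartheta$ and boundedness should control this.
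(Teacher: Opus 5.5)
Your proposal is correct and follows essentially the same route as the paper. You take the initial loss $\lambda^3q^{D+1}$ from \eqref{eq:S-init}, then a per-kernel loss of $q^{D-1}\max\{\widetilde a(x),\widetilde b_\lambda(x)\}$ in blocks of $r-1\approx\lambda q$ kernels, extended past the change point by part (iv) of the ordering lemma. You sum this as a Riemann integral over $x\in[\lambda,1/\lambda]$ and compare with $Rq^D-q^{D+2}=cq^{D+1}+o(q^{D+1})$, exactly as the paper does. Your explicit endpoint arguments make precise what the paper only sketches: at $\lambda=1$ the initial phase alone gives $c\ge1$, and at $\lambda=0$ the fact that $\widetilde a(x)\to1$ as $x\to\infty$ does the same. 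Your alternative ``monotonicity in $r$'' route for $\lambda=0$ is not needed, and would itself require justification.
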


\begin{proof}
The total nominal capacity of the $R$ kernels is $Rq^D$.  The initial loss is given by \eqref{eq:initial-saving}.  In each block for which the guaranteed number of distinct intersections is
$t=xq+o(q)$, the overlap loss per kernel is at least
\[
 q^{D-1}
 \max\set{\widetilde a(x),\widetilde b_\lambda(x)}
 +o(q^{D-1}),
\]
and the block contains $\lambda q+o(q)$ kernels.  Summing the blocks gives a further loss
\[
 \lambda q^{D+1}
 \int_\lambda^{1/\lambda}
 \max\set{\widetilde a(x),\widetilde b_\lambda(x)}\,dx
 +o(q^{D+1}).
\]
A cover requires total contribution at least $q^{D+2}$, whereas
\[
 Rq^D=q^{D+2}+cq^{D+1}+o(q^{D+1}).
\]
Comparing the coefficient of $q^{D+1}$ gives
\eqref{eq:c-lower-Ctilde-lambda}.

At $\lambda=0$ and $\lambda=1$, the conclusion follows from the corresponding finite sums, or by taking the continuous limits of the displayed expression.
\end{proof}

\begin{proof}[Proof of \cref{thm:c2-intro}]
Choose a sequence of covering families that approaches the lower limit in
\eqref{eq:c2-intro}.  If the normalised excess is unbounded, there is nothing to prove, so we may assume
$R=q^2+O(q)$.  Pass to a subsequence along which the maximum sunflower size divided by $q$ converges.

If the maximum codimension-four sunflower has size at most $q$, then
\cref{prop:Ctilde-lambda} gives
$c\ge\min_{0\le\lambda\le1}\widetilde{\cC}(\lambda)=\widetilde c_2$.  If the maximum sunflower has size exactly $q+1$, the unsimplified estimate in the proof of
\cite[Proposition~4.6]{ChowRimanic} gives
$R>q^2+q$.  If it has size at least $q+2$, then
\cite[Proposition~A.1]{ChowRimanic} gives
$R>q^2+q+1$ for $q>8$.  Thus the range covered by
\cref{prop:Ctilde-lambda} is the only possible asymptotic bottleneck, and the variational lower bound follows.
\end{proof}

\subsection{The variational constant}

The functions $\widetilde a$ and $\widetilde b_\lambda$ are piecewise quadratic, with breakpoints at integers in their respective arguments.  Hence
$\widetilde{\cC}(\lambda)$ is an explicit piecewise elementary function.

\begin{lemma}\label{lem:c2tilde-rigorous-lower}
One has
\begin{equation}\label{eq:c2tilde-rigorous-lower}
 \widetilde c_2>0.5278.
\end{equation}
\end{lemma}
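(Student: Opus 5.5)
The task is to show $\widetilde{\cC}(\lambda)>0.5278$ for every $\lambda\in[0,1]$, where $\widetilde{\cC}$ is a rigorously controlled piecewise elementary function. I will not try to evaluate the minimum exactly. Instead I will bound $\widetilde{\cC}$ from below by simpler functions, one on each of finitely many $\lambda$-intervals.

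The first simplification is to discard the maximum using
\[
 \max\set{\widetilde a,\widetilde b_\lambda}\ge\widetilde a
 \qquad\text{and}\qquad
 \max\set{\widetilde a,\widetilde b_\lambda}\ge\widetilde b_\lambda,
\]
choosing whichever branch is convenient on each $x$-subinterval. The second is to use the convexity bound \eqref{eq:vartheta-CS-comparison}, $\vartheta(u)\ge 1/(1+u)$. This gives
\[
 \widetilde a(x)\ge\frac{x}{1+x},
 \qquad
 \widetilde b_\lambda(x)\ge\lambda+\frac{(x-\lambda)(1-\lambda)^2}{1-2\lambda+x}.
\]
These minorants integrate in closed form with logarithms. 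Where this Cauchy--Schwarz-type bound is too weak, I will instead use the exact piecewise-linear form \eqref{eq:vartheta-piecewise} on the first one or two integer cells, $u\in[0,1]$ and $u\in[1,2]$, where $\vartheta(u)=1-u/2$ and $(4-u)/6$ respectively. On these cells the integrand is an explicit quadratic.

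The steps, in order, are as follows.
\begin{enumerate}[label=(\arabic*)]
 \item Near the endpoints, handle $\lambda\le\lambda_0$ and $\lambda\ge\lambda_1$ crudely. For small $\lambda$, the interval of integration $[\lambda,1/\lambda]$ is long and $\widetilde a(x)\to1$ as $x$ grows, so $\lambda\int_\lambda^{1/\lambda}\widetilde a\ge\lambda\bigl(1/\lambda-\lambda-\log(1+1/\lambda)\bigr)$, which is close to $1$. For $\lambda$ near $1$, the term $\lambda^3$ alone already exceeds $0.5278$ once $\lambda\ge0.81$.
 \item On the compact middle range $[\lambda_0,\lambda_1]$, partition into finitely many intervals $[\lambda_j,\lambda_{j+1}]$. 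Use monotonicity in $\lambda$ of the pieces separately: $\lambda^3$ and the prefactor are increasing, and the integration range shrinks as $\lambda$ grows. This gives an interval lower bound of the form
 \[
  \lambda_j^3+\lambda_j\int_{\lambda_{j+1}}^{1/\lambda_{j+1}}\min_{\lambda\in[\lambda_j,\lambda_{j+1}]}\max\set{\widetilde a(x),\widetilde b_\lambda(x)}\,dx .
 \]
 For the $\lambda$-dependence of $\widetilde b_\lambda$, note that $\partial_\lambda\widetilde b_\lambda$ has a definite sign on each cell, or simply use $\widetilde a$ alone on the sub-ranges where it dominates.
 \item Evaluate each resulting explicit integral, or rigorously underestimate it by a lower Riemann sum using monotonicity of the integrand in $x$. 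Check that every interval bound exceeds $0.5278$.
\end{enumerate}

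The main obstacle is the region around the minimiser $\widetilde\lambda_*\approx0.407$, where the true value $0.583$ must be certified above $0.5278$. The margin there is about $0.055$, so the crude $1/(1+u)$ minorant may lose too much. My first attempt will be to keep the exact $\vartheta$ on the cells $u\le2$, where most of the mass near $x\in[\lambda,2]$ lies, use the convex minorant only for the tail $x\ge2$, and take a mesh of width about $0.01$ in $\lambda$. Because $\widetilde{\cC}$ is Lipschitz with a modest constant on $[0.2,0.8]$, with the derivative bounded via the endpoint terms $\lambda\cdot\max\{\cdot\}(1/\lambda)\cdot\lambda^{-2}$, a mesh of this size should leave ample room. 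If the mesh becomes too fine to present by hand, the same interval-arithmetic check can be stated as a finite verified table, consistent with the accompanying evaluation script.
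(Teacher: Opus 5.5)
The cell-wise framework is logically sound. The bound in step (2) is a valid minorant on $[\lambda_j,\lambda_{j+1}]$, because the integrand is nonnegative and the range $[\lambda,1/\lambda]$ only shrinks. Since the true minimum is near $0.583$, a fine enough mesh would eventually certify the claim. But as written this is a plan for a computation, not a proof. The lemma is nothing but a numerical inequality, and on the middle range you never produce a single explicit lower bound to compare with $0.5278$; step (3) is deferred to an unspecified table.

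Moreover, the one closed-form estimate you do write, in step (1), is evaluated lossily. Since $\frac{1+1/\lambda}{1+\lambda}=\frac1\lambda$, one has exactly
\[
 \int_\lambda^{1/\lambda}\frac{x}{1+x}\,dx=\frac1\lambda-\lambda+\log\lambda .
\]
You subtract $\log(1+1/\lambda)$ instead of $\log(1/\lambda)$, and so discard $\lambda\log(1+\lambda)$, about $0.19$ at $\lambda=0.48$. That loss is what makes the Cauchy--Schwarz minorant look too weak away from the endpoints.

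The missing idea is that this minorant, computed exactly and used for every $\lambda$, already suffices; this is the whole of the paper's proof. Discarding $\widetilde b_\lambda$ and using $\widetilde a(x)\ge x/(1+x)$ gives
\[
 \widetilde{\cC}(\lambda)\ge F(\lambda):=1-\lambda^2+\lambda^3+\lambda\log\lambda,
 \qquad
 F''(\lambda)=-2+6\lambda+\lambda^{-1}\ge2\sqrt6-2>2.89,
\]
with $F(0^+)=F(1)=1$. At $\lambda_0=12/25$ one checks $F(\lambda_0)>0.527872$ and $|F'(\lambda_0)|<0.0028$. Strong convexity then yields $\inf F\ge F(\lambda_0)-F'(\lambda_0)^2/(2\cdot2.89)>0.5278$.

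The constant $0.5278$ sits just below $\min F\approx0.52789$. So your worry was misplaced: the $1/(1+u)$ minorant consumes almost all of the margin of about $0.055$ that you identified, but not all of it. No exact $\vartheta$-cells, no $\lambda$-mesh, and no monotonicity analysis of $\widetilde b_\lambda$ are needed. That machinery would only be required to certify something near the numerical value $0.583$.
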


\begin{proof}
By \eqref{eq:vartheta-CS-comparison},
$\widetilde a(x)\ge x/(1+x)$.  Therefore
\begin{align*}
 \widetilde{\cC}(\lambda)
 &\ge F(\lambda)
 :=\lambda^3+
 \lambda\int_\lambda^{1/\lambda}\frac{x}{1+x}\,dx\\
 &=1-\lambda^2+\lambda^3+\lambda\log\lambda.
\end{align*}
Moreover,
\[
 F''(\lambda)=-2+6\lambda+\lambda^{-1}
 \ge2\sqrt6-2>2.89.
\]
Put $\lambda_0=12/25$.  The elementary bounds
$-0.734<\log(12/25)<-0.7339$ give
\[
 F(\lambda_0)>0.527872,
 \qquad
 \abs{F'(\lambda_0)}<0.0028.
\]
Strong convexity therefore implies
\[
 \inf_{0<\lambda\le1}F(\lambda)
 \ge
 F(\lambda_0)-
 \frac{F'(\lambda_0)^2}{2(2.89)}
 >0.5278.
\]
Taking the infimum proves the claim.
\end{proof}

The inequality \eqref{eq:vartheta-CS-comparison} also shows directly why this improves the ordinary Cauchy--Schwarz argument.  Replacing $\vartheta(u)$ by $1/(1+u)$ recovers the two former envelopes
\[
 \frac{x}{1+x}
 \quad\text{and}\quad
 \lambda+
 \frac{(x-\lambda)(1-\lambda)^2}{1+x-2\lambda}.
\]
The inequality is strict for almost every $x$ in every nondegenerate integration interval.  Thus the new variational constant is strictly larger than the constant obtained from Cauchy--Schwarz alone.

A numerical minimisation gives
\[
 \widetilde\lambda_*=0.4067490\ldots,
 \qquad
 \widetilde c_2=0.5829944375\ldots.
\]
The accompanying script \texttt{evaluate\_c2\_constant.py} evaluates the integral after splitting at all piecewise-linear breakpoints and then applies a one-dimensional golden-section search.  No numerical approximation is used in the proof of the exact variational statement.

\section{Low-degree syzygies and relation packets}\label{sec:packets}

The fixed-$k$ bound in \cref{thm:fixed-k-intro} remains below the natural second term $q^{k-1}$.  We now identify the first obstruction encountered by a second-moment covering argument at that scale.

\subsection{Relation spaces and triple intersections}

Recall that $A_k=\Fq[T]_{<k}$.  For three nonzero polynomials $m,f,g$, define
\begin{equation}\label{eq:Rk-def}
 \cR_k(m,f,g)
 :=\set{(A,B,C)\in A_k^3:Am+Bf+Cg=0}.
\end{equation}
The image of the linear map
\[
 A_k^3\longrightarrow\Fq[T],
 \qquad
 (A,B,C)\longmapsto Am+Bf+Cg,
\]
is $U_m+U_f+U_g$.  Therefore
\begin{equation}\label{eq:rank-relation}
 \dim_{\Fq}(U_m+U_f+U_g)
 =3k-\dim_{\Fq}\cR_k(m,f,g).
\end{equation}
If all three speeds have degree at most $D$, orthogonality in
$V_{D+k}$ gives
\begin{equation}\label{eq:triple-intersection}
 \abs{K_m\cap K_f\cap K_g}
 =q^{D-2k+\dim\cR_k(m,f,g)}.
\end{equation}
Thus two independent relations enlarge the generic triple intersection by at least a factor $q^2$.

\subsection{Classification of the first obstruction}

\begin{proof}[Proof of \cref{thm:syzygy-intro}]
Choose linearly independent vectors
\[
 u=(A_1,B_1,C_1),
 \qquad
 v=(A_2,B_2,C_2)
\]
in $\cR_k(m,f,g)$.

Suppose first that $u$ and $v$ are linearly dependent over
$\Fq(T)$.  Let
$w_0\in\Fq[T]^3$ be a primitive polynomial vector spanning their common rational line.  By Gauss's lemma, there are polynomials
$a,b\in\Fq[T]$ such that
$u=aw_0$ and $v=bw_0$.  Since $u$ and $v$ are not scalar multiples over $\Fq$, at least one of $a,b$ is nonconstant.  All coordinates of the corresponding multiple have degree at most $k-1$, so every nonzero coordinate of $w_0$ has degree at most $k-2$.  The identity
$w_0\cdot(m,f,g)=0$ gives alternative \textup{(i)}.

Suppose now that $u$ and $v$ are independent over $\Fq(T)$.  Their cross product
\[
 w:=u\times v
\]
is nonzero.  Each coordinate of $w$ is a $2\times2$ determinant of polynomials of degree at most $k-1$, and hence has degree at most $2k-2$.  Both $w$ and $(m,f,g)$ span the one-dimensional orthogonal complement of
$\operatorname{span}_{\Fq(T)}\{u,v\}$, so
\[
 (m,f,g)=h w
\]
for some $h\in\Fq(T)$.  Divide $w$ by the greatest common divisor of its coordinates and call the resulting primitive vector $w_0=(m_0,f_0,g_0)$.  Then
$(m,f,g)=h_0w_0$ for some $h_0\in\Fq(T)$.  Since $h_0w_0$ has polynomial coordinates and $w_0$ is primitive, Gauss's lemma implies that $h_0$ is a polynomial.  Setting $P=h_0$ gives
\[
 m=Pm_0,
 \qquad
 f=Pf_0,
 \qquad
 g=Pg_0,
\]
with $\deg m_0,\deg f_0,\deg g_0\le2k-2$.  This is alternative \textup{(ii)}.
\end{proof}

For example, when $k=3$, every triple with two independent syzygies has one of the forms
\[
 Am+Bf+Cg=0,
 \qquad
 \deg A,\deg B,\deg C\le1,
\]
or
\[
 m=Pm_0,
 \qquad
 f=Pf_0,
 \qquad
 g=Pg_0,
 \qquad
 \deg m_0,\deg f_0,\deg g_0\le4.
\]
The first is a low-degree labelled relation; the second is a common-factor packet with bounded quotients.

\subsection{A packet-free half-way theorem}

We formulate a quantitative condition under which the harmful triples from
\cref{thm:syzygy-intro} contribute negligibly to the second moment.

\begin{definition}\label{def:packet-free}
Fix $k$.  A sequence of covering families $\cF_q$, with maximum degrees $D_q$ and
\[
 \abs{\cF_q}=q^k+O(q^{k-1}),
\]
is called \emph{packet-free} if the associated kernels can be ordered as
$K_1,K_2,\ldots$ so that, for all but $o(q^k)$ indices $m$, there is a set
$I_m\subset\{1,\ldots,m-1\}$ satisfying the following conditions:
\begin{enumerate}[label=\textup{(\roman*)}]
\item $\abs{I_m}=(1-o(1))q^{k-1}$;
\item the intersections
\[
 V_{m,i}:=K_m\cap K_i,
 \qquad i\in I_m,
\]
are pairwise distinct and have the generic size
$\abs{V_{m,i}}=q^{D_q-k}$;
\item the number of unordered pairs
$\{i,j\}\subset I_m$ for which
\[
 \dim_{\Fq}\cR_k(f_m,f_i,f_j)\ge2
\]
is $o(q^k)$.
\end{enumerate}
All asymptotic notation is for $q\to\infty$ with $k$ fixed.
\end{definition}

The condition does not exclude every low-degree relation.  It only requires that, after selecting about $q^{k-1}$ distinct intersections in a typical new kernel, pairs with two independent syzygies have negligible total second-moment mass.

\begin{theorem}\label{thm:packet-free}
Every packet-free sequence of covering families satisfies
\begin{equation}\label{eq:packet-free-conclusion}
 \abs{\cF_q}
 \ge q^k+\left(\frac12-o(1)\right)q^{k-1}.
\end{equation}
\end{theorem}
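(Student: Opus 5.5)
The plan is to run the Chow--Rimani\'c covering-contribution count along the ordering supplied by \cref{def:packet-free}. For each new kernel we lower-bound the overlap $\abs{K_m\cap\bigcup_{i<m}K_i}$ by a second-moment estimate inside $K_m$. The hypothesis (iii) lets us treat essentially all triple intersections as having the generic size forced by at most one syzygy.

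\emph{Step 1 (reduction to per-kernel overlaps).}
The covering contributions $\abs{K_m\setminus\bigcup_{i<m}K_i}$ sum to $\abs{V_{D+k}}=q^{D+k}$, and each $\abs{K_m}=q^D$. Hence
\[
 \sum_m\Bigl\lvert K_m\cap\bigcup_{i<m}K_i\Bigr\rvert=Nq^D-q^{D+k}.
\]
It therefore suffices to show that, for all but $o(q^k)$ indices $m$,
\[
 \Bigl\lvert\bigcup_{i\in I_m}V_{m,i}\Bigr\rvert\ge\Bigl(\tfrac12-o(1)\Bigr)q^{D-1}.
\]
The exceptional indices contribute a nonnegative overlap, and the good indices number $N-o(q^k)=(1-o(1))q^k$. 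Summing then gives $Nq^D-q^{D+k}\ge(\tfrac12-o(1))q^{D+k-1}$, which is \eqref{eq:packet-free-conclusion}.

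\emph{Step 2 (first and second moments).}
Fix a good $m$, put $t=\abs{I_m}=(1-o(1))q^{k-1}$, and let $Z=\sum_{i\in I_m}\one_{V_{m,i}}$ on $K_m$. By condition (ii),
\[
 M_1=\sum Z=tq^{D-k}=(1-o(1))q^{D-1}.
\]
For an ordered pair $i\ne j$, the set $V_{m,i}\cap V_{m,j}=K_m\cap K_i\cap K_j$ has size $q^{D-2k+\dim\cR_k(f_m,f_i,f_j)}$ by \eqref{eq:triple-intersection}.
\begin{itemize}
 \item If $\dim\cR_k\le1$, this is at most $q^{D-2k+1}$.
 \item If $\dim\cR_k\ge2$, we only use that $V_{m,i}\ne V_{m,j}$ are distinct subspaces of size $q^{D-k}$, so they meet in at most $q^{D-k-1}$ points.
\end{itemize}
Condition (iii) bounds the number of ordered bad pairs by $o(q^k)$. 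Therefore
\[
 M_2=\sum Z(Z-1)\le t^2q^{D-2k+1}+o(q^k)\,q^{D-k-1}=(1+o(1))q^{D-1}.
\]

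\emph{Step 3 (conclusion).}
Cauchy--Schwarz gives $\abs{\set{Z>0}}\ge M_1^2/(M_1+M_2)$; alternatively one can use \cref{lem:integer-support}, which is at least as strong. With the bounds from Step 2, the right-hand side is $(\tfrac12-o(1))q^{D-1}$, as required in Step 1.

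\emph{Main obstacle.}
The delicate step is the bookkeeping in Step 2. One must check that the crude bound $q^{D-k-1}$ on bad pair intersections, multiplied by $o(q^k)$ pairs, stays $o(q^{D-1})$; it does, exactly at the scale the definition was designed for. One must also check that the factor from ordered versus unordered pairs and the $(1-o(1))$ in $t$ do not change the constant $\tfrac12$. A further point is that all error terms must be uniform in $m$, so that they can be summed over $(1-o(1))q^k$ indices. Here $N=q^k+O(q^{k-1})$ ensures that the $o(q^k)$ exceptional indices cost only $o(q^{D+k-1})$ overlap relative to the main term.
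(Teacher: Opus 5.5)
Your proposal is correct and follows essentially the same route as the paper: a first/second-moment estimate for $Z=\sum_{i\in I_m}\one_{V_{m,i}}$ on $K_m$, using the generic triple-intersection bound $q^{D-2k+1}$ for pairs with at most one syzygy and the crude bound $q^{D-k-1}$ for the $o(q^k)$ harmful pairs. Cauchy--Schwarz then gives an overlap of $(\tfrac12-o(1))q^{D-1}$ for each typical kernel, and summing the sequential covering contributions finishes the proof. Your bookkeeping via $\sum Z(Z-1)$ and the total overlap $Nq^D-q^{D+k}$ is only a rearrangement of the paper's $\sum Z^2$ bound and its sum of covering contributions.
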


\begin{proof}
Fix a typical index $m$ from \cref{def:packet-free} and abbreviate
$t:=\abs{I_m}$ and $V_i:=V_{m,i}$.  On $K_m$, define
\[
 Z(x):=\sum_{i\in I_m}\one_{V_i}(x).
\]
The first moment is
\begin{equation}\label{eq:packet-first}
 \sum_{x\in K_m}Z(x)
 =tq^{D_q-k}
 =(1-o(1))q^{D_q-1}.
\end{equation}

For a pair that is not harmful, \eqref{eq:triple-intersection} and
$\dim\cR_k\le1$ give
\[
 \abs{V_i\cap V_j}
 \le q^{D_q-2k+1}.
\]
For a harmful pair, the distinctness of $V_i$ and $V_j$ implies that their intersection has codimension at least $k+1$ inside $K_m$, and hence
\[
 \abs{V_i\cap V_j}
 \le q^{D_q-k-1}.
\]
There are only $o(q^k)$ harmful pairs.  It follows that
\begin{align*}
 \sum_{x\in K_m}Z(x)^2
 &\le tq^{D_q-k}
 +t^2q^{D_q-2k+1}
 +o(q^k)q^{D_q-k-1}\\
 &\le(2+o(1))q^{D_q-1}.
\end{align*}
Cauchy--Schwarz and \eqref{eq:packet-first} yield
\begin{equation}\label{eq:packet-overlap}
 \abs{\bigcup_{i\in I_m}V_i}
 \ge\left(\frac12-o(1)\right)q^{D_q-1}.
\end{equation}
Thus, for all but $o(q^k)$ kernels, the newly covered portion has size at most
\[
 q^{D_q}-
 \left(\frac12-o(1)\right)q^{D_q-1}.
\]
The exceptional kernels contribute at most $q^{D_q}$ points each.  Summing the sequential covering contributions and using
$\abs{V_{D_q+k}}=q^{D_q+k}$ gives
\[
 q^{D_q+k}
 \le
 \abs{\cF_q}q^{D_q}
 -\left(\frac12-o(1)\right)q^{D_q+k-1}.
\]
Division by $q^{D_q}$ proves
\eqref{eq:packet-free-conclusion}.
\end{proof}

\begin{remark}
The theorem is a statement about packet-free sequences, not an unconditional assertion about $C_k(q)$.  It implies the same lower bound for $C_k(q)$ if a sequence of asymptotically extremal covering families is packet-free.  Removing that hypothesis is precisely the agreement problem discussed next.
\end{remark}

\subsection{Why local relations are not yet global packets}

The covering multiplicity has the exact moment identity
\begin{equation}\label{eq:moment-rank}
 \E\Phi^r
 =\sum_{f_1,\ldots,f_r\in\cF}
 q^{-\dim(U_{f_1}+\cdots+U_{f_r})}.
\end{equation}
Indeed, the expectation of
$\one_{K_{f_1}}\cdots\one_{K_{f_r}}$ is the density of the annihilator of
$U_{f_1}+\cdots+U_{f_r}$.  Thus high moments of a near-union-bound cover force many tuples with unexpectedly small block rank.

The rank defect alone is not enough.  Each relation has coefficients in
$A_k$, and these coefficient labels may vary from tuple to tuple.  Many independently labelled local relations can reproduce the same moment numerology without producing a common factor, a common low-dimensional block, or a global recurrence.  An unconditional half-way theorem would require a labelled agreement result: a positive density of triples with
$\dim\cR_k(m,f,g)\ge2$ should decompose into a controlled collection of coherent packets of the two types in
\cref{thm:syzygy-intro}, with common cores charged only once in the global covering count.

This is the function-field form of the relation-label compatibility problem that appears in higher-moment approaches to the classical Lonely Runner Conjecture.  The advantage of the present model is that the labels lie in the fixed finite-dimensional space $A_k$ and the first obstruction admits the explicit classification above.

\section{Further questions}\label{sec:open}

\subsection{A corrected large-field conjecture}

The counterexample in \cref{sec:counterexample} rules out the original unrestricted formulation, but it remains natural to ask whether the projective threshold is correct once the coefficient field is sufficiently large.

\begin{conjecture}\label{conj:asymptotic-CR}
For every fixed $k\ge1$, there is a $q_0(k)$ such that
\[
 C_k(q)=Q_k(q)
\]
for every prime power $q\ge q_0(k)$.
\end{conjecture}

A weaker first target is the second-term estimate
\[
 C_k(q)\ge q^k+(1-o(1))q^{k-1}.
\]
For $k=2$, \cref{thm:c2-intro} gives a positive explicit proportion but not the full coefficient one.  For $k\ge3$, even a uniform positive constant multiplying $q^{k-1}$ would go beyond \cref{thm:fixed-k-intro}.

\subsection{Beyond the exponent $2/3$}

The $q$-ary phase space removes the logarithmic loss from the fixed-$k$ theorem, but the exponent $2/3$ remains built into the present master inequality
\[
 0\le S-c\rho d+CS^2\rho^2.
\]
Balancing the three terms gives $S^3\gtrsim d^2$.  A better power therefore requires an input not contained in rank and Fourier quantisation alone.  Possible routes include showing that the relevant Fourier multiplicities are usually larger than one, making part of the quadratic level favourable, or using coherent relations among dependent speed directions rather than only a linearly independent subfamily.

\subsection{The small exceptional cases}

The explicit construction proves only that $C_3(2)\le13$.

\begin{problem}
Determine $C_3(2)$ exactly and classify all failures of the projective threshold for small pairs $(q,k)$.
\end{problem}

The finite problem may be formulated as set cover on the collection of partial-circulant kernels.  Beyond determining isolated values, the geometry of extremal certificates may indicate which algebraic packets are responsible for failure at small characteristic.

\subsection{Stability at $k=2$}

The function $\vartheta$ gives the optimal consequence of the first two factorial moments of an integer-valued multiplicity.  A further substantial improvement of $\widetilde c_2$ must therefore use additional geometry or higher moments.  Near equality in \cref{lem:integer-support} constrains the multiplicity to concentrate on two adjacent integers, while near-maximal pair intersections force coherent codimension-five labels.  A useful stability theorem should separate an energy-dispersed regime, in which a third factorial moment improves the variational envelope, from a structured regime, in which the intersections form a polynomial packet that can be counted more sharply.

\subsection{Block phases}

The phase product in \cref{sec:phase} assigns one $\Fq$-valued phase coordinate to each selected speed.  A stronger construction would attach phases to the full block
\[
 U_f=fA_k.
\]
Its Fourier coefficients would record relations
\[
 A_1f_1+\cdots+A_rf_r=0,
 \qquad
 A_i\in A_k,
\]
directly.  The obstacle is that these same relations create additional constant terms and destroy automatic normalisation.  Possible approaches include quotienting by a selected relation module or averaging over a correlated phase law whose positive-definite Fourier transform vanishes on specified harmful relation classes.

\subsection{Growing $k$}

The $q$-ary level estimate remains meaningful when $k$ grows with $q$.  The fixed-$k$ restriction enters through the small-degree theorem and through constants in the packet analysis.  A uniform theory would require simultaneous control of the compression endpoint, the dependence of the covering rank on $k$, and the number and geometry of low-degree syzygy packets.

\section*{Code availability}

The files \texttt{verify\_counterexample.py} and
\texttt{evaluate\_c2\_constant.py} accompany this manuscript.  They use only the Python standard library.  The first exhaustively verifies the thirteen-kernel covering certificate; the second evaluates the variational constant in \eqref{eq:c2tilde}.

\section*{Acknowledgements}

The author thanks J\"org Wills for clarifying the early history of the Lonely Runner Conjecture and the origin of the term ``view obstruction.''  The author also thanks Benjamin Bedert for helpful discussions that led to a deeper appreciation of the power of Riesz-product methods in problems of this kind.

\section*{Statement on the use of AI}

OpenAI's ChatGPT was used during exploratory work and preparation of this manuscript to assist with calculations and their verification, code prototyping, literature organisation, exposition, and LaTeX editing.  The author checked the mathematical arguments and takes full responsibility for every claim and for the final contents of the paper.

\end{document}